\newcommand{\on}{\operatorname}
\newcommand{\SU}{\operatorname{SU}}
\newcommand{\SO}{\operatorname{SO}}
\newcommand{\hra}{\hookrightarrow}
\newcommand{\f}{\frac}
\newcommand{\Co}{\mathcal{C}}
\newcommand{\M}{\mathcal{M}}
\newcommand{\om}{\omega}
\newcommand{\C}{\mathbb{C}}
\newcommand{\Z}{\mathbb{Z}}
\newcommand{\R}{\mathbb{R}}
\newcommand{\N}{\mathbb{N}}
\newcommand{\g}{\mathfrak{g}}
\renewcommand{\t}{\mathfrak{t}}
\newcommand{\pr}{\mathrm{pr}}
\newcommand{\mat}[4]{(\begin{smallmatrix} #1 & #2 \\ #3 & #4\end{smallmatrix})}
\newcommand{\I}{{t_*}}
\newcommand{\J}{{u_*}}
\newcommand{\D}{\mathcal{D}}
\newcommand{\su}{\mathfrak{su}}
\newcommand{\Ad}{\mathrm{Ad}}
\newcommand{\wt}{\widetilde}
\newcommand{\ti}{\tilde}
\newcommand{\ca}{\mathcal}
\newcommand\dirac{/\kern-1.2ex\partial} 
\theoremstyle{plain}
\newtheorem{thm}{Theorem}[section]
\newtheorem{lemma}[thm]{Lemma}
\newtheorem{prop}[thm]{Proposition}
\theoremstyle{definition}
\newtheorem{defn}[thm]{Definition}
\theoremstyle{remark}
\newtheorem{remark}[thm]{Remark}
\title{On the Verlinde formulas for $\SO(3)$-bundles}
\author{Derek Krepski}
\address{
Department of Mathematics and Statistics\\
McMaster University\\
Hamilton, ON}
\email{Derek.Krepski@math.mcmaster.ca}
\author{Eckhard Meinrenken}
\address{
Department of Mathematics \\
University of Toronto \\
Toronto, ON}
\email{mein@math.toronto.edu}
\begin{document}
\maketitle


\begin{abstract}
This paper computes the quantization of the moduli space of flat $\SO(3)$-bundles over an oriented surface with boundary, with  prescribed holonomies around the boundary circles. The result agrees with the generalized Verlinde formula conjectured by Fuchs and Schweigert.  
\end{abstract}

\section{Introduction}
Let $G$ be a compact, connected Lie group, $\Sigma$ a compact oriented surface of genus $h$ with $r$ boundary components. 
Given conjugacy classes $\Co_1,\ldots,\Co_r\subset G$, denote by 
\begin{equation}\label{eq:modspace}
 \M(\Sigma,\Co_1,\ldots,\Co_r)
 \end{equation}
the moduli space of flat $G$-bundles over $\Sigma$, with 
boundary holonomies in prescribed conjugacy classes
$\Co_j$. The choice of an invariant inner product on $\g$ defines a symplectic structure 
on the moduli space. Under suitable integrality conditions the moduli space carries a 
pre-quantum line bundle $L$, and one can define the \emph{quantization}
\begin{equation}\label{eq:quant}
 \mathcal{Q}(\M(\Sigma,\Co_1,\ldots,\Co_r))\in\Z
\end{equation}
as the index of the $\on{Spin}_c$-Dirac operator with coefficients in
 $L$.  (It may be necessary to use a partial desingularization as in \cite{me:si}.) Choosing a complex structure on
$\Sigma$ further defines a K\"ahler structure on the moduli space. If $G$ is
simply connected, Kodaira vanishing results \cite{te:bo} show that the above
index coincides with the dimension of the space of holomorphic
sections of $L$. It is given by the celebrated Verlinde formula
\cite{ve:fr,ts:cf,fa:vl,sz:ver,be:cb}.  For symplectic approaches to
the Verlinde formulas, much in the spirit of the present paper, see
\cite{je:in,je:ve,je:bo,bi:sy,al:fi}.

Much less is known for non-simply connected groups. For surfaces without boundary ($r=0$), and  taking $G=\on{PU}(n)$, Verlinde-type formulas were obtained by Pantev \cite{pa:cm} in the case $n=2$ and by 
Beauville \cite{be:ve} for $n$ prime. For more general compact, semi-simple connected 
Lie groups, Fuchs and Schweigert \cite{fu:ac} conjectured a generalization of the 
Verlinde formula, expressed in terms of \emph{orbit Lie algebras}. Partial results on these conjectures were 
obtained in \cite{al:fi}. 

In this article, we will establish Fuchs-Schweigert formulas for the
index \eqref{eq:quant} for the simplest case $G=\SO(3)$. We will use
the recently developed quantization procedure \cite{me:su, me:twi} for
quasi-Hamiltonian actions with group-valued moment map \cite{al:mom}.  
In order to apply these techniques, we present the moduli spaces (\ref{eq:modspace}) as
symplectic quotients of quasi-Hamiltonian $\ti{G}$-spaces for the
universal cover $\ti{G}=\SU(2)$.  In more detail, let $\D_i\subset
\SU(2)$ be conjugacy classes, and consider the quasi-Hamiltonian
$\SU(2)$-space
\[ \ti{M}=\D_1\times \cdots \times \D_s\times \SU(2)^{2h}\]
with moment map the product of holonomies, 
\[ \ti{\Phi}(d_1,\ldots,d_s,a_1,b_1,\ldots,a_h,b_h)=\prod_{i=1}^s d_i \prod_{j=1}^h (a_jb_ja_j^{-1}b_j^{-1}).\]
Put $M=\ti{M}/\Gamma$, where $\Gamma\subset Z^{s+2h}$ is the subgroup
preserving $\ti{M}\subset \SU(2)^{s+2h}$ and $\ti{\Phi}$. Then $M$ is
a quasi-Hamiltonian $\SU(2)$-space, and all connected components of
moduli spaces \eqref{eq:quant} are symplectic quotients $M/ \!\!
/\SU(2)$ for suitable choices of $\D_j$ (see Section \ref{sec:modspace}). Our first main result gives
necessary and sufficient conditions under which the space $M$ admits a
level $k$ pre-quantization \cite{kre:pr}. Using localization, we then
compute the corresponding quantization $\ca{Q}(M)\in R_k(\SU(2))$, an
element of the level $k$ fusion ring (Verlinde ring).  These results are summarized in Theorem \ref{th:main}.  We reformulate
the result as an equivariant version of the Fuchs-Schweigert formula (Theorem \ref{th:FSconjecture});
the non-equivariant formula (see (\ref{eq:noneqFS}) in Section \ref{sec:Smatrix}) is
then obtained from a `quantization commutes with reduction' principle.

Using the results of \cite{me:twi}, it is also possible to compute
quantizations of moduli spaces for non-simply connected groups of higher
rank. However, the determination of the pre-quantization conditions
and the evaluation of the fixed point contributions becomes more
involved. We will return to these questions in a forthcoming paper;
see also the author's abstracts in Oberwolfach
Report No.~ 2011/09.

\section{Preliminaries}\label{sec:prel}
The following notation, consistent with \cite{me:su}, will be used in this paper.  For the Lie group $\SU(2)$ let $T$ be the maximal torus given as the image of 
$$j\colon \on{U}(1)\to \on{SU}(2),\ \ \ 
j(z) = \mat{z}{0}{0}{\bar{z}}.
$$ Let $\Lambda = \ker \exp_T \subset \mathfrak{t}$ denote the
 integral lattice and $\Lambda^*\subset \mathfrak{t}^*$ its dual, the
 (real) weight lattice. 
Let $\rho \in \Lambda^*$ be the generator dual to the generator $\mathrm{d}
 j(2\pi i) \in \Lambda$.  We will use the \emph{basic inner product} on
 $\su(2)$,
$$ \xi \cdot \xi': = \frac{1}{4\pi^2} \mathrm{tr}(\xi^\dagger \xi'),
\quad\quad \xi,\xi' \in \su(2)
$$
to identify $\su(2) \cong \su(2)^*$.  Under this identification, $||\rho||^2=\f{1}{2}$, 
and $\Lambda = 2 \Lambda^*$ with generator $2\rho$. The following two elements of $\SU(2)$ 
will play a special role in this paper: 
\[ \J=\mat{0}{1}{-1}{0}, \ \ \I=\mat{i}{0}{0}{-i}\] 
Observe that $\I=\exp(\rho/2)$, with square $c=\exp \rho$ the non-trivial element in the center $Z:=Z(\SU(2))\cong \Z_2$. The element $\J\in N(T)$ represents the non-trivial element of the Weyl
group $W=N(T)/T\cong \Z_2$. Both $\J,\I$ are contained in the conjugacy class $\D_*\subset \SU(2)$ of 
elements of trace $0$. Note that $\D_*$ is the unique conjugacy class in $\SU(2)$ that is invariant 
under multiplication by $Z$. The quotient $\Co_*=\D_*/Z\cong \R P(2)$ is the conjugacy class in $\SO(3)$ 
consisting of rotations by $\pi$.

\subsection{The fusion ring $R_k(\SU(2))$}
We view the representation ring $R(\SU(2))$ as the subring of
$C^\infty(\SU(2))$ generated by characters of
$\SU(2)$-representations. As a $\Z$-module, it is free with basis
$\chi_0,\,\chi_1,\,\chi_2,\ldots $, where $\chi_m$ is the character of
the irreducible $\SU(2)$-representation on the $m$-th symmetric power $S^m(\C^2)$. The ring
structure is determined by the formula
$$
\chi_m \chi_{m'} = \chi_{m+m'} + \chi_{m+m'-2} + \cdots +\chi_{|m-m'|}.
$$
For $k=0,1,2,\ldots$ let $I_k(\SU(2))$ be the ideal generated by $\chi_{k+1}$ and let
$$ R_k(\SU(2)) = R(\SU(2))/I_k(\SU(2)) $$
be the \emph{level $k$ fusion ring} (or \emph{Verlinde ring}).  As a $\Z$-module, $R_k(\SU(2))$ is free, with basis $\tau_0, \tau_1, \ldots , \tau_k$ the images of $\chi_0, \chi_1, \ldots, \chi_k$ under the quotient homomorphism. Let $q=e^{\frac{i\pi}{k+2}}$ be the $2k+4$-th root of unity, and define \emph{special points}
\begin{equation}\label{eq:special} t_l=j(q^{l+1}),\ \ l=0,\ldots,k.\end{equation}
Then $I_k(\SU(2)) \subset R(\SU(2))$ has an alternative description as the ideal of 
characters vanishing at all special points \eqref{eq:special}. Hence, the evaluation of 
characters at the special points descends to evaluations 
$R_k(\SU(2))\to \C,\ \tau\mapsto \tau(t_l)$. 

The product in the complexified fusion ring $R_k(\SU(2))\otimes_\Z \C$ can be diagonalized 
using the \emph{$S$-matrix}, given by the Kac-Peterson formula 
\begin{equation} \label{Kac-Peterson}
S_{m,l} = (\tfrac{k}{2}+1)^{-\f{1}{2}} \sin\big( \tfrac{\pi(l+1)(m+1)}{k+2}\big),
\end{equation}
for $l,m= 0, 1, \ldots, k$. The $S$-matrix is orthogonal, and the alternative basis elements 
\[ \ti{\tau}_l = \sum_m S_{0,l} S_{m,l} \tau_m\]
satisfy $\ti{\tau}_m(t_l)=\delta_{m,l}$, hence 
\[ \ti{\tau}_{m}\ti{\tau}_{m'}=\delta_{m,m'}\ti{\tau}_{m}.\] 
The basis elements $\{ \tau_0, \ldots, \tau_k\}$ are expressed in terms of the alternative basis as 
$\tau_m=\sum_{l} S_{0,l}^{-1} S_{m,l}\ti{\tau}_l$. 

\subsection{Quasi-Hamiltonian $G$-spaces} \label{sec:quasi}
We recall some basic definitions and facts from \cite{al:mom}. Let $G$ be a compact Lie group with Lie algebra $\mathfrak{g}$, equipped with an invariant inner product, denoted by a dot $\cdot$.  Let $\theta^L,\,\theta^R$ denote the left-invariant, right-invariant Maurer-Cartan forms on $G$, and let 
$\eta= \tfrac{1}{12} \theta^L \cdot [\theta^L,\theta^L]$ denote the Cartan 3-form on $G$.  For a $G$-manifold $M$, and $\xi \in \mathfrak{g}$, let $\xi^\sharp$ denote the generating vector field, 
defined in terms of the action on functions $f\in C^\infty(M)$ by 
$(\xi^\sharp f)(x) = \frac{d}{dt}\Big|_{t=0} f(\exp (-t\xi). x).
$
The Lie group $G$ is itself viewed as a $G$-manifold for the conjugation action. 

\begin{defn}
A quasi-Hamiltonian $G$-space is a triple $(M,\omega,\Phi)$ consisting of a $G$-manifold $M$, a $G$-invariant 2-form $\omega$ on $M$, and an equivariant map $\Phi\colon M \to G$, called the moment map, satisfying:
\begin{enumerate}
\item $d\omega + \Phi^* \eta = 0 $,
\item $\iota_{\xi^\sharp}\omega +\tfrac{1}{2} \Phi^*((\theta^L +
\theta^R) \cdot \xi )=0$ for all $\xi \in \mathfrak{g}$,
\item at every point $x\in M$, $\ker \omega_x \cap \ker \mathrm{d} \Phi_x = \{0\}$.
\end{enumerate}
\end{defn}

The \emph{fusion product} of two quasi-Hamiltonian $G$-spaces $(M_1,\omega_1,\Phi_1)$ and $(M_2,\omega_2,\Phi_2)$
is the product $M_1\times M_2$, with the diagonal $G$-action, 2-form 
\begin{align} \label{fusionform}
\omega &= \pr_1^*\omega_1 + \pr_2^*\omega_2 + \tfrac{1}{2} \pr_1^*\Phi_1^*\theta^L \cdot \pr_2^*\Phi_2^*\theta^R,
\end{align}
 and moment map $\Phi=\Phi_1\Phi_2$.
 
 The \emph{symplectic quotient} of a quasi-Hamiltonian $G$-space is the symplectic space $M/\!/G = \Phi^{-1}(e)/G$.  Similar to the theory of Hamiltonian group actions, the group unit $e$ is a regular value of $\Phi$ if and only if $G$ acts locally freely on the level set $\Phi^{-1}(e)$, and in this case the pull-back of the 2-form to the 
 level set descends to a symplectic 2-form on the orbifold $\Phi^{-1}(e)/G$. 
If $e$ is a singular value, then $M/\!/G$ is a singular symplectic space as defined in \cite{sj:st}.

The conjugacy classes $\mathcal{C}\subset G$ are basic examples of
quasi-Hamiltonian $G$-spaces.  The moment map is the inclusion into $G$, and
the 2-form $\omega$ is given on generating vector fields by the
formula
\begin{equation}\label{eq:conjclass}
\omega_g(\zeta^\sharp(g), \xi^\sharp(g)) = \tfrac{1}{2}( \xi \cdot
\Ad_g \zeta - \zeta \cdot \Ad_g \xi).\end{equation} 
Together with the
\emph{double} $\mathbf{D}(G)=G\times G$, equipped with diagonal
$G$-action and moment map $\Phi(g,h)=ghg^{-1}h^{-1}$, these are the
building blocks of the main example appearing in this paper.  As shown
in \cite{al:mom}, the moduli space of flat $G$-bundles over a compact,
oriented surface $\Sigma$ of genus $h$ with $s$ boundary components,
with boundary holonomies in prescribed conjugacy classes
$\mathcal{C}_j$, $j=1, \ldots, s$, is a symplectic quotient of a
fusion product:
\begin{equation} \label{eq:moduli}
M(\Sigma, \mathcal{C}_1, \ldots , \mathcal{C}_s) =\mathcal{C}_1 \times \cdots \times \mathcal{C}_s \times \mathbf{D}(G)^h /\!/G.
\end{equation}

If the group $G$ is simply connected, then the fibers of the moment
map for any compact, connected quasi-Hamiltonian $G$-space are
connected. In particular, \eqref{eq:moduli} is connected in that case.
If $G$ is non-simply connected, the space \eqref{eq:moduli} may have
several components.

To clarify the decomposition into components, we use the following
construction.  Suppose $p\colon \check{G}\to G$ is a homomorphism of
compact, connected Lie groups, with finite kernel $Z$. Then $Z$ is a
subgroup of the center of $\check{G}$, and $G=\check{G}/Z$.  For any
quasi-Hamiltonian $G$-space $(N,\omega,\Phi)$, let $\check{N}$ denote
the fiber product defined by the pull-back square
\begin{equation} \label{pbdiagram}
\xymatrix{
\check{N} \ar[r]^{\check{\Phi}} \ar[d]^{{p}_N} & \check{G} \ar[d]^{p} \\
N \ar[r]^{\Phi} & G
}
\end{equation}
Then $(\check{N}, \check{\omega}, {\check{\Phi}})$ is a quasi-Hamiltonian $\check{G}$-space, 
for the diagonal $\check{G}$-action on $\check{N}\subset N\times \check{G}$, and with the 2-form 
$\check{\omega} = {p}_N^*\omega$. 
Simple properties of this construction are:

\begin{prop}\label{pb} 
\begin{enumerate}
\item[(i)]
We have a canonical identification of symplectic quotients
\[ \check{N}/\!/\check{G} \cong N/\!/ G.\] 
\item[(ii)] For a fusion product $N={N}_1\times \cdots \times {N}_r$ 
of quasi-Hamiltonian ${G}$-spaces, the space $\check{N}$ is a 
quotient of $\check{N}_1\times\cdots \times \check{N}_r$ by the group  
$\{(c_1,\ldots,c_r)\in Z^r|\ \prod_{j=1}^r c_j=e\}$.
\item[(iii)] If $\Phi\colon N\to G$ lifts to a moment map $\Phi'\colon N\to \check{G}$, thus turning $N$ into a quasi-Hamiltonian $\check{G}$-space
$(N,\omega,\Phi')$, then 
\[ \check{N}=N\times Z\]
as a fusion product of quasi-Hamiltonian $\check{G}$-spaces. Here $Z$
 is viewed as a quasi-Hamiltonian  $\check{G}$-space, 
with trivial action and with moment map the inclusion to $\check{G}$.  
\end{enumerate}
\end{prop}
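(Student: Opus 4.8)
The plan is to prove the three parts by exhibiting explicit maps and checking, in each case, that they respect the three ingredients of the quasi-Hamiltonian structure: the $\check G$-action, the $2$-form, and the moment map. I will use throughout that $Z=\ker p$ is central in $\check G$, and that $p$, being a covering homomorphism, identifies $\check\g$ with $\g$ as inner-product Lie algebras, so that $p^*\theta^L=\theta^L$ and $p^*\theta^R=\theta^R$ under this identification; from the defining square \eqref{pbdiagram} I will also use $p\circ\check\Phi=\Phi\circ p_N$. For part (i) I would restrict \eqref{pbdiagram} over the group unit: since $p^{-1}(e)=Z$ and $\check\Phi$ is the projection $\check N\to\check G$, the level set is $\check\Phi^{-1}(e)=\{(x,e):x\in\Phi^{-1}(e)\}$, on which $p_N$ restricts to a diffeomorphism onto $\Phi^{-1}(e)$. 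The $\check G$-action there is $\check h\cdot(x,e)=(p(\check h)\cdot x,\,e)$, so it factors through $G$ with $Z$ acting trivially, and hence has the same orbits as the $G$-action on $\Phi^{-1}(e)$; since $\check\omega=p_N^*\omega$, the two quotients are canonically identified as symplectic spaces. (Local freeness of the $\check G$-action on the level set is equivalent to that of the $G$-action, so the identification respects whichever smooth, orbifold, or stratified-singular structure is present.)

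For part (ii) I would define $q\colon\check N_1\times\cdots\times\check N_r\to\check N$ by $q\big((x_1,\check g_1),\ldots,(x_r,\check g_r)\big)=(x_1,\ldots,x_r,\,\check g_1\cdots\check g_r)$; this lands in $\check N$ because $p(\check g_1\cdots\check g_r)=\prod_j p(\check g_j)=\prod_j\Phi_j(x_j)$. To prove surjectivity, given $(x_1,\ldots,x_r,\check g)\in\check N$ I would pick lifts $\check h_j$ of $\Phi_j(x_j)$, note that $\check h_1\cdots\check h_r=\check g\,z$ for a unique $z\in Z$, and replace $\check h_1$ by $\check h_1 z^{-1}$ (using centrality of $z$) to obtain a preimage. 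A direct computation then shows that two points of the source have the same image precisely when they differ by some $(c_1,\ldots,c_r)\in Z^r$ with $\prod_j c_j=e$, acting (freely) by right translation on the $\check G$-factors, which is exactly the group named in the statement. It remains to check that $q$ is a morphism of quasi-Hamiltonian spaces: from $p_N\circ q=p_{N_1}\times\cdots\times p_{N_r}$ and $\check\Phi\circ q=\check\Phi_1\cdots\check\Phi_r$ one reads off that $q^*\check\Phi$ is the $r$-fold fusion moment map, and that $q^*\check\omega=(p_{N_1}\times\cdots\times p_{N_r})^*\omega$ equals the $r$-fold fusion $2$-form on $\check N_1\times\cdots\times\check N_r$, since each term of \eqref{fusionform} is natural under $p$ (using $p_{N_j}^*\omega_j=\check\omega_j$ and $p_{N_j}^*\Phi_j^*\theta^L=\check\Phi_j^*\theta^L$, and likewise for $\theta^R$).

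For part (iii), writing $\Phi=p\circ\Phi'$ identifies $\check N$ with $\{(x,\check g):\Phi'(x)^{-1}\check g\in Z\}$, and I would use the diffeomorphism $\check N\to N\times Z$, $(x,\check g)\mapsto(x,\,\Phi'(x)^{-1}\check g)$, with inverse $(x,z)\mapsto(x,\Phi'(x)z)$. Equivariance follows from equivariance of the lift $\Phi'$ together with centrality of $Z$: $\check h\cdot(x,\check g)=(p(\check h)\cdot x,\,\check h\check g\check h^{-1})$ is sent to $(p(\check h)\cdot x,\,\Phi'(p(\check h)\cdot x)^{-1}\check h\check g\check h^{-1})=(p(\check h)\cdot x,\,\Phi'(x)^{-1}\check g)$, which matches the $\check G$-action on $N\times Z$ that is trivial on the $Z$-factor. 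Under the diffeomorphism $\check\Phi(x,\check g)=\check g=\Phi'(x)\cdot(\Phi'(x)^{-1}\check g)$ becomes the fusion moment map $\Phi'\cdot\iota_Z$; and since $Z$ is finite, $\omega_Z$ and the cross term in \eqref{fusionform} vanish, so the fusion $2$-form on $N\times Z$ reduces to $\pr_1^*\omega$, corresponding to $p_N^*\omega=\check\omega$. I do not expect any step to be a genuine obstacle — the whole argument is a matter of checking these structural compatibilities against the definitions; the points that need the most care are the correction-by-$z$ argument for surjectivity of $q$ in (ii) and the combined use of equivariance of $\Phi'$ with centrality of $Z$ in (iii), while the $2$-form identities, though a little tedious, are forced term by term by the naturality of \eqref{fusionform} under the covering $p$.
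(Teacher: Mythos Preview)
Your proof is correct and follows essentially the same approach as the paper's own argument: the explicit maps you write down (the identification of level sets in (i), the multiplication map $q$ in (ii), and the diffeomorphism $(x,\check g)\mapsto(x,\Phi'(x)^{-1}\check g)$ in (iii)) are precisely those the paper uses, with your version simply supplying the structural checks (equivariance, moment maps, $2$-forms) that the paper leaves to the reader.
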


\begin{proof} 
(i) By definition of $\check{N}$, the level sets ${\check{\Phi}}^{-1}(\check{e})$ and $\Phi^{-1}(e)$ are identified, and the pull-backs of the 2-forms to the level sets coincide. Since central elements in $\check{G}$ act trivially on $\check{N}$, the orbit spaces ${\check{\Phi}}^{-1}(\check{e})/\check{G}$ and $\Phi^{-1}(e)/G$ are identified as well.  

(ii) Think of the spaces $\check{N}_i$ as submanifolds of $N_i\times \check{G}$. The canonical map 
\[ \check{N}_1 \times \cdots \times \check{N}_r\to \check{N},\ 
(x_1,g_1,x_2,g_2,\ldots,x_r,g_r)\mapsto (x_1,\ldots,x_r,g_1,\ldots,g_r)\]
is exactly the quotient map by $\{(c_1,\ldots,c_r)\in Z^r|\ \prod_{j=1}^r c_j=e\}$, and it preserves the $\check{G}$-actions and 2-forms. 

(iii) The map $N\times Z\to \check{N},\ (x,c)\mapsto 
(x,\Phi'(x)c)$ is the desired diffeomorphism.
\end{proof}

\subsection{The moduli space example} \label{sec:modspace}
Our main interest is the moduli space of flat $\SO(3)$-bundles with prescribed boundary 
holonomies, i.e.~ \eqref{eq:moduli} with $G=\SO(3)$. In the notation of the previous 
Section, we will describe the 
quasi-Hamiltonian $\SU(2)$-space $\check{N}$ associated to the quasi-Hamiltonian $\SO(3)$-space 
\[ N=\Co_1\times\cdots \times \Co_s \times \mathbf{D}(\SO(3))^h.\] 
Choose conjugacy classes $\D_j\in \SU(2)$ with $p(\D_j)=\Co_j$, and define a 
quasi-Hamiltonian $\SU(2)$-space 
\begin{equation}\label{eq:tim}
\ti{M}=\D_1\times\cdots \times \D_s\times \mathbf{D}(\SU(2))^h.
\end{equation}
Put 
\begin{equation}\label{eq:M}
 M=\ti{M}/\Gamma
,\end{equation}
where $\Gamma\subset Z^{s+2h}$ consists of  $\gamma=(\gamma_1,\ldots,\gamma_{s+2h})$
with the properties $\prod_{j=1}^s\gamma_j=e$ and $\gamma_j\D_j=\D_j$ for $ j\le s$.
(Equivalently, $\gamma_j=e$ for all $\D_j\neq\D_*$). The conditions guarantee that $\gamma$ acts on $\ti{M}$,
preserving the 2-form and moment map which hence descend to $M=\ti{M}/\Gamma$.
Let $\Co_*\cong \R P(2)$ be the $\SO(3)$-conjugacy class consisting of rotations
by $\pi$. It is the unique $\SO(3)$-conjugacy class whose pre-image in $\SU(2)$ 
is connected. This pre-image is the $\SU(2)$-conjugacy class $\D_*\cong S^2$  of matrices of trace $0$.

\begin{lemma}\label{lem:check} 
With $N$  as above, we have 
\[ \check{N}\cong \begin{cases} M &\mbox{ if }\ \ \exists\  j\colon \Co_j=\Co_*\\
M\times Z  &\mbox{ if }\ \ \forall\  j\colon \Co_j\neq\Co_*.
\end{cases}\] 
\end{lemma}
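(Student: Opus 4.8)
The plan is to build an explicit map $F\colon \ti{M}\to\check{N}$, identify its fibres with $\Gamma$-orbits, and then decide whether $F$ is onto according to whether some $\D_j$ equals the trace-zero class $\D_*$. Write $p\colon\SU(2)\to\SO(3)$ for the covering with kernel $Z$, and let $\pi\colon\ti{M}\to N$ be the map obtained by applying $p$ in each factor (using $p(\D_j)=\Co_j$ and the induced surjection $\mathbf{D}(\SU(2))\to\mathbf{D}(\SO(3))$). Since $p$ is a local isomorphism carrying the Maurer--Cartan forms and the Cartan $3$-form of $\SU(2)$ to those of $\SO(3)$, and since the conjugacy-class and double $2$-forms \eqref{eq:conjclass}, \eqref{fusionform} are assembled locally from these data, $\pi$ is $\SU(2)$-equivariant, satisfies $\pi^*\omega_N=\omega_{\ti M}$, and intertwines $\ti{\Phi}$ with the moment map $\Phi_N$ of $N$ in the sense that $p\circ\ti{\Phi}=\Phi_N\circ\pi$. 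Hence $F(x):=(\pi(x),\ti{\Phi}(x))$ lands in the fibre product $\check{N}=N\times_{\SO(3)}\SU(2)$, and $F$ is a morphism of quasi-Hamiltonian $\SU(2)$-spaces: it is equivariant, $F^*\check{\omega}=\pi^*\omega_N=\omega_{\ti M}$ (as $\check{\omega}=p_N^*\omega_N$ and $p_N\circ F=\pi$), and $\check{\Phi}\circ F=\ti{\Phi}$. Moreover $d\pi$ is an isomorphism, so $F$ is a local diffeomorphism.

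Next I would compute the fibres of $F$. If $F(x)=F(x')$, then $\pi(x)=\pi(x')$, so $x'=\gamma\cdot x$ for a unique $\gamma\in Z^{s+2h}$, and $x'\in\ti{M}$ forces $\gamma_j\D_j=\D_j$ for $j\le s$ (equivalently $\gamma_j=e$ whenever $\D_j\ne\D_*$). Since the central factors on the double coordinates leave commutators unchanged, one computes $\ti{\Phi}(\gamma\cdot x)=\big(\prod_{j=1}^s\gamma_j\big)\,\ti{\Phi}(x)$, so $\ti{\Phi}(x)=\ti{\Phi}(x')$ forces $\prod_{j=1}^s\gamma_j=e$, i.e.\ $\gamma\in\Gamma$. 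Thus $F$ descends to an injective local diffeomorphism $\bar F\colon M=\ti{M}/\Gamma\to\check{N}$ of quasi-Hamiltonian $\SU(2)$-spaces, hence an open embedding; it is an isomorphism onto $\check{N}$ precisely when $F$ is surjective.

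It remains to settle surjectivity. The map $\pi$ is onto, and for $x=\pi(\hat x_0)$ the values taken by $\ti{\Phi}$ on $\pi^{-1}(x)\cap\ti{M}$ form the set $\{\prod_{j\le s}\gamma_j\}\cdot\ti{\Phi}(\hat x_0)$, with $\gamma$ ranging over tuples satisfying $\gamma_j\D_j=\D_j$. If some $\Co_j=\Co_*$, i.e.\ some $\D_j=\D_*$, then $c\D_j=\D_j$, the corresponding $\gamma_j$ is unconstrained, so $\prod_{j\le s}\gamma_j$ runs over all of $Z$; hence $F$, and therefore $\bar F$, is onto, and $\check{N}\cong M$. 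If instead every $\Co_j\ne\Co_*$, then every $\gamma_j$ with $j\le s$ is forced to equal $e$; now $\Phi_N$ admits the global lift $\Phi_N'\colon N\to\SU(2)$ given on the $\Co_j$-factors by the section $\sigma_j=(p|_{\D_j})^{-1}$ and on each double by the $\SU(2)$-commutator $\Psi(\bar a,\bar b)=[a,b]$ (well defined independently of the chosen lifts $a,b$). Since $p$ is a local isomorphism, $(N,\omega_N,\Phi_N')$ is a quasi-Hamiltonian $\SU(2)$-space, so Proposition \ref{pb}(iii) gives $\check{N}\cong N\times Z$; and in this case $\Gamma\cong Z^{2h}$ acts only on the double coordinates, so the map $\prod_j(p|_{\D_j})\times\mathrm{id}$ identifies $M=\ti{M}/\Gamma$ with $N$, carrying $\omega_M$ to $\omega_N$ and (via $d_j=\sigma_j(\bar d_j)$) the descended $\ti{\Phi}$ to exactly $\Phi_N'$. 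Hence $M\cong(N,\omega_N,\Phi_N')$ and $\check{N}\cong M\times Z$, completing both cases.

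The main obstacle is the bookkeeping: one must keep the quasi-Hamiltonian data — $\SU(2)$-equivariance, the $2$-forms, and the distinction between $\SO(3)$- and $\SU(2)$-valued moment maps — consistent through all the identifications. The single substantive input is the elementary observation that left translation by the central element $c=\exp\rho$ preserves a conjugacy class $\D_j\subset\SU(2)$ if and only if $\D_j=\D_*$; this is exactly what governs whether the extra factor $Z$ appears.
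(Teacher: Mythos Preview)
Your argument is correct, and it takes a somewhat different route from the paper's. The paper proceeds factor by factor: it first observes that the $\SO(3)$-commutator on $\mathbf{D}(\SO(3))$ lifts canonically to $\SU(2)$, so $\check{\mathbf{D}}(\SO(3))=\mathbf{D}(\SO(3))\times Z$ by Proposition~\ref{pb}(iii); similarly $\check{\Co}_j=\D_j\times Z$ whenever $\Co_j\ne\Co_*$, while $\check{\Co}_*=\D_*$ directly; and then it invokes Proposition~\ref{pb}(ii) to assemble $\check{N}$ from these pieces as a quotient by the kernel subgroup of $Z^{s+h}$. Your approach is instead a single global construction: you build the explicit map $F=(\pi,\ti\Phi)\colon\ti M\to\check N$, identify its fibres as precisely the $\Gamma$-orbits via the computation $\ti\Phi(\gamma\cdot x)=(\prod_{j\le s}\gamma_j)\ti\Phi(x)$, and then read off surjectivity or the missing $Z$-translate from whether the product $\prod_{j\le s}\gamma_j$ can be made nontrivial. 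What your approach buys is an entirely explicit isomorphism and a transparent explanation of why the extra $Z$ appears (the image of $\bar F$ misses exactly the $c$-translate in the $\SU(2)$-fibre when no $\gamma_j$ is free); the paper's approach is terser and more modular, reusing Proposition~\ref{pb} as a black box, at the cost of leaving the final bookkeeping (matching the quotient of $\prod_i\check N_i$ with the stated $M$ or $M\times Z$) to the reader. One small wording fix: in your last paragraph, ``$\prod_j(p|_{\D_j})\times\mathrm{id}$'' should really read ``$\prod_j(p|_{\D_j})$ on the conjugacy-class factors and the quotient map $\mathbf{D}(\SU(2))^h\to\mathbf{D}(\SO(3))^h$ on the double factors,'' since $\Gamma=Z^{2h}$ acts nontrivially there.
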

\begin{proof}
The moment map $\mathbf{D}(\SU(2))\to \SU(2)$ (given by Lie group
commutator) is invariant under the action of $Z\times Z$, hence it
descends to a lift $\mathbf{D}(\SO(3))\to \SU(2)$ of the 
commutator map for $\SO(3)$. Thus
\[ \check{\mathbf{D}}(\SO(3))=\mathbf{D}(\SO(3))\times Z.\]
If $\Co_j\neq \Co_*$, the map $\D_j\to \Co_j$ is a diffeomorphism, and defines 
a lift of the moment map $\Co_j\hra \SO(3)$. Hence 
\[ \check{\Co}_j= \D_j\times Z\]
in that case. On the other hand, the conjugacy class $\Co_*$ satisfies 
\[ \check{\Co}_*=\D_*.\]
With these ingredients, the claim follows from Proposition \ref{pb}. 
\end{proof}

We may choose the labeling of the conjugacy classes $\Co_1,\ldots,\Co_s$ in such a way that 
$\Co_j=\Co_*$ for $j \le r$ and $\Co_j\neq \Co_*$ for $j>r$. 
The space \eqref{eq:M} is then a fusion product 
\begin{equation}\label{eq:dirprod}
 M=M'\times \D_{r+1}\times \cdots \times \D_s\times \mathbf{D}(\SO(3))^h,
\end{equation}
where $\mathbf{D}(\SO(3))$ is viewed as a quasi-Hamiltonian $\SU(2)$-space 
(using the canonical lift of the $\SO(3)$ moment map, as in the proof of Lemma 
\ref{lem:check}), and where 
\[ M'=(\D_*\times \cdots \times \D_*)/\Gamma'\]
with $r$ factors, and with $\Gamma'=\{(\gamma_1,\ldots,\gamma_r)\in Z^r|\ \prod \gamma_j=e\}$.  Let us describe the 2-form $\om'$ of the space $M'$, in terms of its 
pull-back $\ti{\om}'$ to the universal cover $\tilde{M}=\D_*\times\cdots \times \D_*$. 
Since the 2-form on $\D_*$ is just zero, only the fusion terms contribute. 
By iterative use of the formula
(\ref{fusionform}) for the fusion product, one obtains
\begin{equation}\label{eq:hatomega}
\ti{\omega}'=\tfrac{1}{2} \sum_{i<j} g_i^*\theta^L\cdot \Ad_{g_{i+1}\cdots g_{j-1}}(g_j^*\theta^R),
\end{equation}
where $g_i\colon \ti{M}\to \D_*\subset \SU(2)$ denotes projection onto the $i$-th factor. 

\section{Quantization of the moduli space of flat $\SO(3)$-bundles}
In this section we use localization to compute the quantization of the space $M=(\D_1\times\cdots\times\D_s\times \mathbf{D}(\SU(2))^h)/\Gamma$, as an element of the level $k$ fusion ring $R_k(\SU(2))$.

\subsection{Pre-quantization}\label{subsec:preq}
Recall that we fix the inner product $\cdot$ on $\su(2)$ to be the
\emph{basic inner product}. Then $\eta\in \Omega^3(\SU(2))$ is integral, and
represents a generator $x\in H^3(\SU(2);\Z)\cong \Z$.  The condition
$d\omega+\Phi^*\eta=0$ from the definition of a quasi-Hamiltonian
space says that the pair $(\omega,\eta)$ defines a relative cocycle in
$\Omega^3(\Phi)$, the algebraic mapping cone of the pull-back map
$\Phi^*\colon \Omega^*(G)\to\Omega^*(M)$. Let $k\in \N$.
\begin{defn} \label{preq} \cite{kre:pr, me:su}
A \emph{level $k$ pre-quantization} of a quasi-Hamiltonian
$\SU(2)$-space $(M,\omega,\Phi)$ is an integral lift $\alpha\in
H^3(\Phi\, ;\Z)$ of the class $k[(\omega,\eta)]\in H^3(\Phi\,
;\R)$. \end{defn}
A necessary and sufficient condition for the existence of a level $k$ pre-quantization is that for all smooth singular 2-cycles $\Sigma\in Z_2(M)$, and all 
smooth singular 3-chains 
$C\in C_3(G)$ such that $\partial C=\Phi(\Sigma)$, 
\[ k\big(\int_\Sigma\om +\int_C \eta\big)\in\Z.\]
We list some basic properties and examples of level $k$
pre-quantizations.
\begin{enumerate}
\item[(a)] 
The set of level $k$ pre-quantizations is a torsor under the torsion group $\on{Tor}(H^2(M,\Z))$ of isomorphism classes of flat line bundles. 
\item[(b)] The level $k$ pre-quantized conjugacy classes of $\SU(2)$
are exactly those of the elements $\exp(\frac{m}{k}\rho)$ with
$m=0,\ldots,k$ \cite[Proposition 7.3]{me:su}.
\item[(c)] The double $\mathbf{D}(\SO(3))$ (viewed as a quasi-Hamiltonian $\SU(2)$-space) admits a level $k$ pre-quantization if and only if $k$ is even \cite[Proposition 7.4]{me:su}. 
\item[(d)] 
If $M_1$ and $M_2$ are pre-quantized quasi-Hamiltonian $\SU(2)$-spaces at level $k$, then their fusion product $M_1 \times M_2$ inherits a pre-quantization at level $k$.  Conversely, a pre-quantization of the product induces pre-quantizations of the factors. See \cite[Proposition 3.8]{kre:pr}. 
 \item[(e)] A level $k$ pre-quantization 
of $M$ induces a pre-quantization of the symplectic quotient $M/ \!/ \SU(2)$, equipped with the $k$-th multiple of the symplectic form. 
\item[(f)] 
The long exact sequence in relative cohomology gives a necessary condition 
$k\Phi^*(x)=0$ for the existence of a level $k$ pre-quantization. If $H^2(M;\R)=0$, this condition is also sufficient \cite[Proposition 4.2]{kre:pr}.  
\item[(g)] 
The existence of the canonical `twisted $\on{Spin}_c$-structure' \cite[Section 6]{me:su} on quasi-Hamiltonian $\SU(2)$-spaces $(M,\omega,\Phi)$ implies that 
$2\Phi^*(x)=W^3(M)$, the third integral Stiefel-Whitney class. Since this is a 2-torsion class, $4\Phi^*(x)=0$. In fact, there is a distinguished element $\beta\in H^3(\Phi\,;\Z)$ whose image in $H^3(\SU(2)\,;\Z)$ is $4x$. If $H^2(M,\R)=0$, this element gives a distinguished level $4$ pre-quantization. 
\end{enumerate}

Given a level $k$ pre-quantization of a quasi-Hamiltonian $\SU(2)$-space $(M,\om,\Phi)$ 
the construction from \cite{me:su} produces a \emph{quantization} $\ca{Q}(M)\in R_k(\SU(2))$, 
an element of the level $k$ fusion ring. It is obtained as a push-forward 
in twisted equivariant $K$-homology, using the Freed-Hopkins-Teleman theorem \cite{fr:lo1} 
to identify $R_k(\SU(2))$ with the equivariant twisted $K$-homology of $\SU(2)$ at level 
$k+2$. This is the
quasi-Hamiltonian counterpart of the $\mathrm{Spin}_c$ quantization of
an ordinary compact Hamiltonian $\SU(2)$-space, which produces an element of 
$R(\SU(2))$ as the equivariant index of a
$\mathrm{Spin}_c$-Dirac operator with coefficients in an equivariant
pre-quantum line bundle. The quantization procedure for
quasi-Hamiltonian $G$-spaces satisfies properties similar to its
Hamiltonian analog.  These include
\begin{enumerate}
\item compatibility with products, $\mathcal{Q}(M_1 \times M_2) = \mathcal{Q}(M_1)\mathcal{Q}(M_2)$; and
\item the `quantization commutes with reduction' principle, 
  $\mathcal{Q}(M/\!\!/ G)=\ca{Q}(M)^G $. 
  \end{enumerate}
Here $R_k(G)\to \Z,\ \tau\mapsto \tau^G$ is the trace defined by
$\tau_m^G=\delta_{m,0}$.

\subsection{Pre-quantization of $M$} 
Let us now consider level $k$ pre-quantizations of the quasi-Hamiltonian $\SU(2)$-space 
\[ M=(\D_1\times \cdots \times \D_s\times \mathbf{D}(\SU(2))^h)/\Gamma\] 
from \eqref{eq:M}. 
\begin{thm}\label{th:preq}
The quasi-Hamiltonian $\SU(2)$-space $M$ carries a level $k$ pre-quantization if and only if the following conditions are satisfied:
\begin{enumerate}
\item[(i)] The conjugacy classes $\D_j$ are of the form $\SU(2).\exp(\f{m_j}{k}\rho)$ with
$m_j\in\{0,\ldots,k\}$, 
\item[(ii)] if $h\ge 1$, then $k\in 2\N$, 
\item[(iii)] if the number of $\D_*$-factors is $r\ge 3$, then $k\in 4\N$. 
\end{enumerate}
\end{thm}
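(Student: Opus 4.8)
The plan is to reduce, via the fusion decomposition \eqref{eq:dirprod} $M=M'\times\D_{r+1}\times\cdots\times\D_s\times\mathbf{D}(\SO(3))^h$ together with property (d) of Section~\ref{subsec:preq}, to the level $k$ pre-quantizability of the individual factors, and then to treat the one genuinely new factor $M'$. By (d), $M$ admits a level $k$ pre-quantization if and only if each factor does. The factors $\D_j$ with $j>r$ (so $\D_j\neq\D_*$) are handled by property (b): such a $\D_j$ is level $k$ pre-quantized exactly when $\D_j=\SU(2).\exp(\tfrac{m_j}{k}\rho)$ with $m_j\in\{0,\dots,k\}$. Each $\mathbf{D}(\SO(3))$-factor is level $k$ pre-quantized if and only if $k$ is even, by (c); this yields (ii) in the case $h\ge 1$. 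So everything comes down to deciding when $M'$ carries a level $k$ pre-quantization: I will show this holds for all $k$ if $r=0$, exactly for $k$ even if $r\in\{1,2\}$, and exactly for $4\mid k$ if $r\ge 3$. For $r\ge 1$ the condition ``$k$ even'' is precisely condition (i) for the indices $j\le r$, since then $\D_j=\D_*=\SU(2).\exp(\tfrac12\rho)=\SU(2).\exp(\tfrac mk\rho)$ with $m=k/2\in\{0,\dots,k\}$; and ``$4\mid k$'' for $r\ge 3$ is condition (iii).

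Consider then $M'=(\D_*\times\cdots\times\D_*)/\Gamma'$ with $r$ factors. The cases $r=0$ ($M'$ a point) and $r=1$ ($M'=\D_*$, handled by (b)) are immediate. For $r\ge 2$ the key preliminary fact is that $H^2(M';\R)=0$. I would deduce this from the Cartan--Leray spectral sequence of the covering $\tilde M'=\D_*^{\,r}\to M'$ with finite deck group $\Gamma'\cong Z^{r-1}$: since $\Gamma'$ is finite, $H^2(M';\R)=H^2(\tilde M';\R)^{\Gamma'}$, and on $H^2(\tilde M';\R)=H^2((S^2)^r;\R)=\R^r$ the nontrivial central element $c\in Z$ acts on the $i$-th summand through the antipodal map of $\D_*\cong S^2$, i.e.\ by $-1$; as every coordinate is negated by some element of $\Gamma'$ (for $r\ge 2$), the invariants vanish. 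With $H^2(M';\R)=0$, property (f) applies and gives: $M'$ is level $k$ pre-quantizable if and only if $k\,\Phi'^*(x)=0$ in $H^3(M';\Z)$, where $x$ generates $H^3(\SU(2);\Z)$. Thus the theorem reduces, for $r\ge 2$, to computing the order of the torsion class $\Phi'^*(x)$.

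Two bounds are easy. First, property (g) gives $4\Phi'^*(x)=0$, so the order divides $4$. Second, the pullback of any level $k$ pre-quantization of $M'$ along $\tilde M'\to M'$ is a level $k$ pre-quantization of $\tilde M'=\D_*^{\,r}$, which by (b) and (d) forces $k$ even; hence the order is at least $2$. For $r=2$ this pins the order down to $2$, giving the claimed condition. For $r>3$ the statement can be bootstrapped from the case $r=3$: writing $\Gamma'_r$ as containing the block-diagonal subgroup $\Gamma'_3\times\Gamma'_{r-3}$ with quotient $\cong Z$, the space $M'_3\times M'_{r-3}$ is a double cover of $M'_r$, so a level $k$ pre-quantization of $M'_r$ restricts to one of $M'_3\times M'_{r-3}$, which by (d) induces one of $M'_3$, forcing $4\mid k$ by the $r=3$ case; combined with the bound from (g) and property (f) this gives order $4$ for all $r\ge 3$, as wanted.

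The outstanding case $r=3$ is the main obstacle: one must show that $M'_3=(\D_*^{\,3})/\Gamma'_3$, with $\Gamma'_3\cong Z\times Z$, is \emph{not} level $2$ pre-quantizable, i.e.\ that $2\Phi'^*(x)\neq 0$ (equivalently, since $2\Phi'^*(x)=W^3(M'_3)$ by (g), that $M'_3$ is not $\on{Spin}_c$). I would do this by exhibiting an explicit $2$-cycle on which $2(\omega',\eta)$ has non-integral period. Choosing two commuting deck transformations $\delta_1,\delta_2$ generating $\Gamma'_3\cong Z\times Z$, a basepoint $\tilde x=(\I,\I,\I)\in\D_*^{\,3}$, and paths in $\tilde M'$ from $\tilde x$ to $\delta_1\tilde x$ and to $\delta_2\tilde x$, the relation $\delta_1\delta_2=\delta_2\delta_1$ together with $\pi_1(\tilde M')=\pi_1((S^2)^3)=0$ lets one fill in a square $u\colon[0,1]^2\to\tilde M'$ whose composite with the covering map descends to a torus $\sigma\colon T^2\to M'_3$. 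Since $\pi^*\omega'=\tilde\omega'$ is the fusion $2$-form \eqref{eq:hatomega}, the integral $\int_\sigma\omega'=\int_{[0,1]^2}u^*\tilde\omega'$ is a concrete Maurer--Cartan computation, and $\Phi'\circ\sigma$ bounds a $3$-chain $C$ in $\SU(2)$ (as $H_2(\SU(2))=0$) with $\int_C\eta$ likewise explicit. The punchline of that computation — which I expect to be the delicate point — is that, because $\I=\exp(\rho/2)$ is a \emph{half}-lattice point and $\|\rho\|^2=\tfrac12$, the fusion terms and the $3$-form contribute $\int_\sigma\omega'+\int_C\eta\in\tfrac14+\tfrac12\Z$; hence $2\bigl(\int_\sigma\omega'+\int_C\eta\bigr)\notin\Z$, so $2\Phi'^*(x)\neq 0$, the order of $\Phi'^*(x)$ equals $4$, and by property (f) the space $M'_3$ is level $k$ pre-quantizable exactly when $4\mid k$. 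This completes the reduction and establishes conditions (i)--(iii).
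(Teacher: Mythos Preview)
Your overall strategy---reduce via the fusion decomposition \eqref{eq:dirprod} and property (d), handle $\D_j$ and $\mathbf{D}(\SO(3))$ via (b) and (c), then analyze $M'=(\D_*)^r/\Gamma'$ using $H^2(M';\R)=0$ and property (f)---is exactly the paper's. But there are two places where your argument diverges from the paper's, one of which is an actual gap.

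\textbf{The gap at $r=2$.} You write that the two bounds ``order divides $4$'' (from (g)) and ``order at least $2$'' (from pulling back to $\tilde M'$) ``pin the order down to $2$'' when $r=2$. They do not: they only give order $\in\{2,4\}$, and you still have to exclude $4$. The paper closes this with a Poincar\'e duality argument: for $r=2$ the space $M'=(S^2\times S^2)/\Z_2$ is a closed orientable $4$-manifold (the simultaneous antipodal map is orientation-preserving) with $\pi_1\cong\Z_2$, so $H^3(M';\Z)\cong H_1(M';\Z)\cong\Z_2$, whence $2\Phi'^*(x)=0$ automatically. Without some such input your proof does not establish that $k\in 2\N$ \emph{suffices} for $r=2$.

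\textbf{The case $r\ge 3$.} Here your argument differs from the paper's in two respects. First, you bootstrap $r>3$ from $r=3$ via the double cover $M'_3\times M'_{r-3}\to M'_r$; that is a correct and pleasant reduction, whereas the paper simply runs the same explicit construction for all $r\ge 3$ at once. Second---and this is the substantive point---for $r=3$ you propose to build a $2$-torus from a commuting square of deck transformations based at $(\I,\I,\I)$ and then compute $\int_\sigma\omega'+\int_C\eta$. This is viable in principle, but as written it is only a plan: no paths are specified, and the answer genuinely depends on the choice of filling, with the $\eta$-term over the bounding $3$-chain $C$ typically nontrivial and awkward. The paper's explicit torus
\[
T\times T\to \D_*^{\,r},\qquad (h_1,h_2)\mapsto (h_1\J,\ h_2\J,\ h_1h_2\J,\ \J,\ldots,\J)
\]
is chosen precisely so that $\Phi$ maps it into the circle $T\J^{\,r}$, where the generator $x$ restricts to zero for dimension reasons; hence the $\eta$-contribution vanishes and the obstruction reduces to the single elementary integral $\int_{\tilde X}\tilde\omega'_X=1$, which after dividing by the $4$-fold covering $\tilde X\to X$ gives $\int_X\omega_X=\tfrac14$ and forces $k\in 4\Z$. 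Your sketch does not yet reach this computation, and the specific heuristic you invoke (``$\I=\exp(\rho/2)$ is a half-lattice point'') is not what drives the $\tfrac14$---it is the covering degree of $\tilde X\to X$ together with the unit symplectic area of $\tilde X$.
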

Note that if at least one $\D_*$-factor appears, then the first condition requires that $k\in 2\N$
since $\D_*=\SU(2).\exp(\f{1}{2}\rho)$.  
\begin{proof}
Since a level $k$ pre-quantization of $M$ induces a level $k$ pre-quantization 
of the universal cover $\ti{M}$, it is a necessary condition that all $\D_j$ be pre-quantizable.  
That is, $\D_j=\SU(2).\exp(\f{m_j}{k}\rho)$ with $m_j\in\{0,\ldots,k\}$.  

Let us enumerate the conjugacy classes in such a way that
$\D_1=\ldots=\D_r=\D_*$.  Using the decomposition \eqref{eq:dirprod}
and the known pre-quantization conditions (b),(c) for the conjugacy classes 
$\D_j$ and the double $\mathbf{D}(\SO(3))$, 
together with the fusion property (d), the proof is reduced to the case 
$h=0,\ s=r$.
We may thus assume $M=(\D_*\times\cdots \times \D_*)/\Gamma$ with $r$
factors. If $r=1$ then $M=\D_*$, which is pre-quantized at level $k$
if and only if $k$ is even. Suppose $r>1$.  The non-trivial
element $c\in Z$ acts on $H^2(\mathcal{D}_*;\R)\cong\R$ as
multiplication by $-1$.  Hence, $\Gamma$ acts on $H^2(M;\R)\cong \R^r$
by componentwise sign changes. In particular, the $\Gamma$-invariant
part is trivial. Since $\Gamma$ acts freely, it follows that
\[ H^2(M\,;\R)\cong H^2(\ti{M}\,;\R)^\Gamma=0.\]
Hence, by Property (f),  a level $k$ pre-quantization exists if and only if 
$k \Phi^*(x)=0$. If $r=2$, so that
$M=(\D_*\times\D_*)/\Z_2$, Poincar\'e duality gives that
$H^3(M;\Z) \cong \Z_2$; therefore $2\Phi^*(x)=0$.
Hence the condition $k\in 2\N$ is also sufficient if $r=2$.

It remains to consider the case $r\ge 3$. By Property (g), the
condition $k\in 4\N$ is sufficient. Let us show that it is also
necessary. Observe that the non-identity component of the normalizer,
the circle $T\J=N(T)-T$, is a single conjugacy class inside
$N(T)$. Since $\J\in \D_*$, it follows that $T\J\subset \D_*$.  Let
$\ti{X}\subset \ti{M}=\D_*\times\cdots\times\D_*$ be the 2-torus given as the image of the map
\[ T\times T\to \ti{M},\ \ (h_1,h_2)\mapsto \big(h_1\J,h_2\J,h_1h_2\J,\J,\ldots,\J\big),\]
and denote by $X$ its image in $M$. Let $\ti{\om}_X,\om_X$ be the pull-backs of the 
quasi-Hamiltonian 2-forms on $\ti{X},\,X$. Since $T\J=\J T$, we have $\ti{\Phi}(\ti{X})=\Phi(X)
\subset T\J^r$. Since the generator $x\in H^3(\SU(2),\Z)$ pulls 
back to zero on this circle (for dimension reasons), the existence of a level $k$ pre-quantization of 
$M$ requires that $k\int_{X}\om_X\in\Z$. Since the projection $\ti{X}\to X$ 
is a 4-fold covering, $\int_{X}\om_X=\frac{1}{4}\int_{\ti{X}}\ti{\om}_X$. Hence it is necessary that 
$k\int_{\ti{X}}\ti{\om}_X\in 4\Z$. 

Let $\theta\in \Omega^1(T,\t)$ be the Maurer-Cartan form for $T$. 
From the general formula \eqref{eq:hatomega}, and using $(h\J)^*\theta^L=-h^*\theta,\ 
(h\J)^*\theta^R=h^*\theta$, 
we obtain 
\[ \ti{\om}_X=\frac{1}{2} \big(-h_1^*\theta\wedge h_2^*\theta+h_1^*\theta\wedge (h_1h_2)^*\theta
-h_2^*\theta\wedge (h_1h_2)^*\theta\big)=\frac{1}{2} h_1^*\theta\wedge h_2^*\theta.\]
Writing elements of $T$ in the form $h=j(e^{2\pi i v})$, we may take $v\in [0,1]$ 
as the coordinate on $T\cong \R/\Z$. Since the lattice $\Lambda$ is generated by $2\rho$, we find  
$h_i^*\theta=2 d v_i\otimes \rho$, hence
\[ \ti{\om}_X= 2||\rho||^2 \ d v_1\wedge  d v_2= d v_1\wedge d v_2\]
integrates to $1$. This gives the condition $k\in 4\N$. 
\end{proof}

\subsection{Fixed point components}
Suppose $M$ is a level $k$ pre-quantized quasi-Hamiltonian $\SU(2)$-space, and let 
$\ca{Q}(M)\in R_k(\SU(2))$ 
be its quantization. By \cite[Theorem 9.5]{me:su}, the numbers 
$\mathcal{Q}(M)(t)$ with $t=t_l,\ l=0,\ldots,k$ are given as a sum of contributions 
from the fixed point manifolds of $t$:
\begin{align} \label{localization}
\mathcal{Q}(M)(t) &= \sum_{F\subset M^t} \int_F \frac{\widehat{A}(F)\
\on{Ch}(\ca{L}_F,\, t)^{1/2}}{D_\R(\nu_F,t) }.
\end{align}
The ingredients of the right hand side will be described below, and explicitly computed in the context of our main example \eqref{eq:M}. The quantizations of $\SU(2)$-conjugacy classes and of the double 
$\mathbf{D}(\SO(3))$ (viewed as a quasi-Hamiltonian $\SU(2)$-space) were computed in \cite{me:su}. 

For the remainder of this section, we therefore focus on the case $h=0,\,s=r\ge 2$, i.e.~  
$M=(\D_*\times \cdots\times \D_*)/\Gamma$. 

\subsubsection{Fixed point sets of $M$}\label{subsec:fixedpointsets}
We need to determine the components 
$F\subset M^t$ of the 
fixed point manifold for $t=t_l$, $l=0, \ldots, k$, and describe various aspects of $F$ and its normal bundle $\nu_F$. Consider first a general regular element $t\in T^{\mathrm{reg}}$.  
Define the following two submanifolds of $\D_*$, labeled by the elements of the center $Z=\{e,c\}$
as follows: 
\[ Y^{(e)}=\D_*\cap T=\{\I,\I^{-1}\},\ \ Y^{(c)}=T\J.\]
Thus $Y^{(e)}$ is the fixed point set of $\Ad(\I)$, while $Y^{(c)}$ consists of elements satisfying 
$\Ad(\I)(g)=cg$. Note that both are $Z$-invariant.
%
For $\gamma=(\gamma_1,\ldots,\gamma_r) \in \Gamma$, consider the $\Gamma$-invariant submanifold 
$$
\ti{F}^{(\gamma)} = Y^{(\gamma_1)}\times\cdots \times Y^{(\gamma_r)}.
$$
and put $F^{(\gamma)} = \ti{F}^{(\gamma)}/\Gamma$. Let $\mathsf{l}(\gamma)$ be the number of 
$\gamma_i$'s that are equal to $c$. Then $\ti{F}^{(\gamma)}$ is a disjoint union of $2^{r-\mathsf{l}(\gamma)}$ tori 
of dimension $\mathsf{l}(\gamma)$. 
Let $\varepsilon=(e, \ldots, e)$ denote the group unit in $\Gamma$. If $\gamma\not=\varepsilon$, then $\Gamma$ acts transitively 
on the set of components of $\ti{F}^{(\gamma)}$. Hence $F^{(\gamma)}$ is a (connected) torus, and 
since $|\Gamma|=2^{r-1}$, it follows that the projection restricts 
to a $2^{\mathsf{l}(\gamma)-1}$-fold covering on each component of $\ti{F}^{(\gamma)}$. If $\gamma=\varepsilon$, 
$\ti{F}^{(\varepsilon)}$ consists of $2^r$ points, and hence $F^{(\varepsilon)}$ consists of two points. 

\begin{prop} \label{fixedpointsets}The fixed point set of $t\in T^{\mathrm{reg}}$ in $M$ is 
\[ M^t =\begin{cases}F^{(\varepsilon)} &\mbox{ if }\ \ t\notin \{\I, \I^{-1}\},\\
\coprod_{\gamma \in \Gamma} F^{(\gamma)}&\mbox{ if }\ \ t\in \{\I, \I^{-1}\}.
\end{cases}\]
\end{prop}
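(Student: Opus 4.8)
The strategy is to reduce the computation of $M^t = (\ti M)^t/\Gamma$ to understanding the fixed point set of $t$ acting on a single factor $\D_*$, and then to assemble the product. Since $\Gamma$ acts freely on $\ti M$, a point $\Gamma x \in M$ is fixed by $t$ precisely when $t\cdot x = \gamma^{-1}\cdot x$ for some (necessarily unique) $\gamma \in \Gamma$; so the first step is to stratify $(\ti M)$ according to this ``twisted fixed point'' condition and observe that the $t$-fixed locus of $M$ is $\coprod_{\gamma\in\Gamma}\bigl(\{x\in\ti M : t\cdot x=\gamma^{-1}\cdot x\}\bigr)/\Gamma$, where the union is over representatives of $\Gamma$ and the pieces with different $\gamma$ may get identified but do land in the manifolds $F^{(\gamma)}$ already defined. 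Concretely, writing $x=(g_1,\dots,g_r)$ with $g_i\in\D_*$, the conjugation action of $t$ on the $i$-th factor sends $g_i \mapsto tg_it^{-1}$, so the twisted condition decouples as $tg_it^{-1} = \gamma_i^{-1}g_i = c^{\epsilon_i} g_i$ (using $Z=\{e,c\}$, $c^2=e$), componentwise.

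\textbf{Key steps.} (1) Analyze the single-factor equation $tg t^{-1} = g$ and $tgt^{-1} = cg$ for $t\in T^{\mathrm{reg}}$ and $g\in\D_*$. For the untwisted equation, $g$ must commute with the regular element $t$, hence $g\in T\cap\D_* = \{\I,\I^{-1}\} = Y^{(e)}$. For the twisted equation $\Ad_t(g) = cg$: since $c$ is central, this says $\Ad_t$ acts on the line through $g$ (inside $\SU(2)$, thinking of $g\mapsto -g$ as multiplication by $c$) — more precisely one checks directly that $g\in T\J$ satisfies $\Ad_t(g)=c g$ because for $g = h\J$ with $h\in T$ one has $t (h\J) t^{-1} = h\, t\J t^{-1} = h\,\J\,(\J^{-1}t\J)t^{-1} = h\J\, t^{-1}t^{-1}$... and $\J^{-1}t\J = t^{-1}$, so $t(h\J)t^{-1} = h\J t^{-2}$; for this to equal $c\cdot h\J$ one needs $t^2 = c$, i.e. $t\in\{\I,\I^{-1}\}$. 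This is the crucial case split: the twisted equation has solutions (all of $T\J = Y^{(c)}$) exactly when $t^2=c$, equivalently $t\in\{\I,\I^{-1}\}$, and otherwise has no solutions at all. (2) Conclude that if $t\notin\{\I,\I^{-1}\}$, the only surviving $\gamma$ is $\gamma=\varepsilon$, and $(\ti M)^t = \ti F^{(\varepsilon)}$, whence $M^t = F^{(\varepsilon)}$ (two points). (3) If $t\in\{\I,\I^{-1}\}$, then for every $\gamma\in\Gamma$ the twisted-fixed locus is exactly $\ti F^{(\gamma)} = Y^{(\gamma_1)}\times\cdots\times Y^{(\gamma_r)}$, and these descend to the $F^{(\gamma)}$; since the twisting constant $\gamma$ is recorded intrinsically (it is determined by $t\cdot x$ versus $x$) and $\Gamma$ permutes the $\gamma$'s by left translation while simultaneously moving the base point, one checks that the map $\coprod_{\gamma}F^{(\gamma)} \to M^t$ is a bijection onto its image and is surjective. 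Putting the pieces together gives the two cases in the statement.

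\textbf{Main obstacle.} The delicate point is step (3): verifying that the various $F^{(\gamma)}$, $\gamma\in\Gamma$, are genuinely disjoint inside $M$ and that their union is all of $M^t$, rather than some being identified under the $\Gamma$-quotient. The resolution is that the element $\gamma$ with $t\cdot x = \gamma^{-1}\cdot x$ depends only on the $\Gamma$-orbit of $x$ in a $\Gamma$-equivariant way — replacing $x$ by $\delta\cdot x$ replaces $\gamma$ by $\delta\gamma\delta^{-1}=\gamma$ since $\Gamma\subset Z^r$ is abelian — so $\gamma$ is a well-defined function on $M^t$, giving the disjoint decomposition $M^t = \coprod_{\gamma}\{[x] : t\cdot x=\gamma^{-1}\cdot x\}$ indexed honestly by $\Gamma$, and each piece is $\ti F^{(\gamma)}/\Gamma = F^{(\gamma)}$ by the analysis in step (1). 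One should also double-check the edge behavior ($r=2$, where $\Gamma\cong\Z_2$ and there are just two pieces $F^{(\varepsilon)}$ and $F^{((c,c))}$) is consistent with the formula, and note the count: $\coprod_\gamma F^{(\gamma)}$ has $|\Gamma| = 2^{r-1}$ components, one a pair of points ($\gamma=\varepsilon$) and the rest tori of the indicated dimensions.
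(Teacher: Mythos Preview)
Your approach is essentially the same as the paper's: lift to $\ti M$, observe that $[x]\in M^t$ iff $\Ad(t)g_i=\gamma_i g_i$ componentwise for some $\gamma\in\Gamma$, and analyze the two cases $\gamma_i=e$ and $\gamma_i=c$ on a single factor $\D_*$.

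There is one soft spot. In the twisted case $\Ad_t(g)=cg$ you compute, for $g=h\J\in T\J$, that $t(h\J)t^{-1}=h\J\,t^{-2}$, and deduce $t^2=c$. This shows that \emph{if} $g\in T\J$ then the equation holds iff $t\in\{\I,\I^{-1}\}$; it does not yet show that every solution $g\in\D_*$ lies in $T\J$, nor that there are no solutions at all when $t^2\neq c$. The paper closes this by rewriting $\Ad_t(g)=cg$ as $\Ad_{g^{-1}}(t)=ct$: since $t$ is regular and $ct\in T$, any such $g$ must lie in $N(T)$; the Weyl group element it represents must send $t\mapsto ct$, which (as $W\cong\Z_2$ with nontrivial element $t\mapsto t^{-1}$) forces $t^{-1}=ct$, i.e.\ $t^2=c$, and then $g\in N(T)\setminus T=T\J$. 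This single line replaces both your coordinate computation and the unproved assertion that follows it.

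Your step~(3) is correct but more elaborate than necessary. The sets $Y^{(e)}$ and $Y^{(c)}$ are disjoint and each $Z$-invariant, so the $\ti F^{(\gamma)}$ are pairwise disjoint $\Gamma$-invariant subsets of $\ti M$; hence their images $F^{(\gamma)}$ in $M=\ti M/\Gamma$ are automatically disjoint, and no equivariance bookkeeping with $\delta\gamma\delta^{-1}$ is needed.
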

\begin{proof} 
An element $(g_1,\ldots,g_r)\in \ti{M}$ maps 
to a point in $M^t$ if and only if there exists $\gamma=(\gamma_1,\ldots,\gamma_r)\in\Gamma$ with 
$\Ad(t)g_i=g_i\gamma_i$, for $i=1,\ldots,r$. If $\gamma_i=e$, this condition gives 
$g_i\in T$, since $t$ is regular. If $\gamma_i=c$, the condition says that $\Ad(g_i^{-1})(t)=\gamma_i t$. 
Since $t$ is regular, this happens if and only if $t\in \{\I,\,\I^{-1}\}$, with $g_i\in N(T)$ representing 
the non-trivial Weyl group element. 
\end{proof}

\subsubsection{The symplectic volume of the components of the fixed point set} \label{sympvol}
Each $F^{(\gamma)}\subset M^t$ is a quasi-Hamiltonian $T$-space, with
moment map the restriction of $\Phi$.  (See e.g.~ \cite[Proposition
3.1]{me:twi}.)  In particular, they are symplectic.
\begin{lemma} \label{restofform}
The symplectic volume of  each component of $\ti{F}^{(\gamma)}$ is equal to $1$. Thus
\[ \on{vol}(F^{(\gamma)})=2^{1-\mathsf{l}(\gamma)}.\]
\end{lemma}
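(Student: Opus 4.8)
The plan is to compute the symplectic volume of a component of $\ti F^{(\gamma)}$ directly from the explicit formula \eqref{eq:hatomega} for the pull-back $\ti\om'$ to $\ti M=\D_*\times\cdots\times\D_*$, exactly as was done at the end of the proof of Theorem \ref{th:preq} for the special $2$-torus $\ti X$. First I would fix $\gamma\in\Gamma$ and parametrize a component of $\ti F^{(\gamma)}=Y^{(\gamma_1)}\times\cdots\times Y^{(\gamma_r)}$. On a factor with $\gamma_i=e$ the set $Y^{(e)}=\{\I,\I^{-1}\}$ is a point, so $g_i$ is constant there and contributes nothing; on a factor with $\gamma_i=c$ we have $Y^{(c)}=T\J$, and I would write $g_i=h_i\J$ with $h_i=j(e^{2\pi i v_i})\in T$, using $v_i\in\R/\Z$ as coordinate, so that the component is the torus $\{(v_i)\}$ of dimension $\mathsf l(\gamma)$. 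The key computational input is the pair of identities $(h\J)^*\theta^L=-h^*\theta$ and $(h\J)^*\theta^R=h^*\theta$ already recorded in the text, together with $h^*\theta=2\,dv\otimes\rho$ (since $\Lambda$ is generated by $2\rho$) and $\|\rho\|^2=\tfrac12$.

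Substituting into \eqref{eq:hatomega}, the generating terms indexed by pairs $i<j$ both lying in the set $S=\{i:\gamma_i=c\}$ survive, while any term involving an index outside $S$ vanishes because $g_i^*\theta^L$ or $g_j^*\theta^R$ is then zero. The crucial point is that on a factor of $T\J$ one has $\Ad$ acting as a Weyl reflection on $\t$, but $\theta$ is $\t$-valued and $\rho$ is (up to the Weyl action) well-behaved; in fact the intermediate $\Ad_{g_{i+1}\cdots g_{j-1}}$ either fixes $\rho$ or sends it to $-\rho$ depending on the parity of how many of the intervening $g$'s lie in $N(T)-T$. I would handle this by the same manipulation that collapsed $\ti\om_X$ to $\tfrac12 h_1^*\theta\wedge h_2^*\theta$ in the proof of Theorem \ref{th:preq}: after accounting for signs, the sum over $i<j$ in $S$ telescopes so that $\ti\om'$ restricted to the component equals $\sum_{a<b} dv_{i_a}\wedge dv_{i_b}$ where $i_1<i_2<\cdots<i_{\mathsf l(\gamma)}$ enumerate $S$ — i.e. (a multiple of) the standard symplectic form $\sum_a dv_{i_{2a-1}}\wedge dv_{i_{2a}}$ on the $\mathsf l(\gamma)$-torus. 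Its Liouville volume $\int (\tfrac{1}{(\mathsf l/2)!}\ti\om'^{\wedge \mathsf l/2})$ over the unit torus $(\R/\Z)^{\mathsf l(\gamma)}$ is then $1$; note $\mathsf l(\gamma)$ is automatically even here, since $\prod_i\gamma_i=e$ forces an even number of $c$'s.

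Finally, the statement about $F^{(\gamma)}=\ti F^{(\gamma)}/\Gamma$ follows from the covering-degree count already established in \S\ref{subsec:fixedpointsets}: for $\gamma\neq\varepsilon$ the projection restricts to a $2^{\mathsf l(\gamma)-1}$-fold covering on each component of $\ti F^{(\gamma)}$, and since $\Gamma$ acts by the symplectomorphisms $\ti\om'$ is built from, volume scales by the reciprocal of the degree, giving $\on{vol}(F^{(\gamma)})=2^{1-\mathsf l(\gamma)}\cdot 1$; the case $\gamma=\varepsilon$ (two points, $\mathsf l=0$) is trivial and consistent. I expect the main obstacle to be the bookkeeping of signs coming from the $\Ad$-factors $\Ad_{g_{i+1}\cdots g_{j-1}}$ on the $\t$-valued forms when several factors are of type $T\J$ — making precise that all the cross terms reorganize into the standard symplectic form rather than something degenerate. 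The cleanest way around this is probably to reduce to the two-factor computation already in the paper by an inductive/fusion argument: regard $\ti F^{(\gamma)}$ as a fusion product of the $\mathsf l(\gamma)$ circles $T\J$ (with the constant $\I^{\pm1}$ factors contributing trivially), and apply formula \eqref{fusionform} iteratively, at each stage reusing the identity $\ti\om_{T\J\times T\J}=\tfrac12 h_1^*\theta\wedge h_2^*\theta$.
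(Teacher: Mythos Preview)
Your approach differs from the paper's, and the central step has a gap. When you restrict $\ti\om'$ to a component of $\ti F^{(\gamma)}$ via \eqref{eq:hatomega}, the intermediate $\Ad$-factor contributes a sign $(-1)^{b-a-1}$ (counting the $T\J$-entries strictly between positions $i_a$ and $i_b$), so together with the sign in $(h\J)^*\theta^L=-h^*\theta$ the restricted form is
\[
\ti\om'\big|_{\ti F^{(\gamma)}}=\sum_{a<b}(-1)^{b-a}\,dv_{i_a}\wedge dv_{i_b},
\]
not $\sum_{a<b}dv_{i_a}\wedge dv_{i_b}$, and in any case neither expression is ``(a multiple of) the standard symplectic form'' $\sum_a dv_{i_{2a-1}}\wedge dv_{i_{2a}}$ once $\mathsf l(\gamma)\ge 4$. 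Your conclusion that the volume equals $1$ is nonetheless correct, but it requires verifying that the Pfaffian of the skew matrix with entries $(-1)^{b-a}$ above the diagonal (equivalently, after the substitution $w_a=(-1)^a dv_{i_a}$, the all-ones upper-triangular skew matrix) is $\pm 1$ in every even size --- a short induction on the size, but not something that ``telescopes''. Your fallback inductive/fusion suggestion points in the right direction but would still need the multiplicativity of volume under fusion stated and used precisely.

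The paper sidesteps all of this. It observes that $Y^{(e)}=\{\I,\I^{-1}\}$ and $Y^{(c)}=T\J$ are conjugacy classes in the disconnected group $N(T)$, and invokes the Liouville volume machinery of \cite{al:du}: the Liouville volume of a fusion product is the product of the volumes, and for a conjugacy class $G/G_g$ there is the closed formula $\on{vol}(\Co)=|\det_{\g_g^\perp}(1-\Ad_g)|^{1/2}\,\on{vol}(G)/\on{vol}(G_g)$, which gives $\on{vol}(Y^{(e)})=2$ and $\on{vol}(Y^{(c)})=1$. Multiplying yields $\on{vol}(\ti F^{(\gamma)})=2^{r-\mathsf l(\gamma)}$; since the $N(T)$-valued moment map of $\ti F^{(\gamma)}$ actually lands in $T$, this Liouville volume coincides with the symplectic volume, and dividing by the $2^{r-\mathsf l(\gamma)}$ components gives $1$ each. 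This is both shorter and more conceptual than the direct coordinate computation you propose.
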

\begin{proof}
The construction from \cite{al:du} associates to any quasi-Hamiltonian $G$-space 
(with $G$ compact, but possibly disconnected) a Liouville volume, in such a way that the volume of a fusion product is the product of the volumes. If $G=T$, so that the space is symplectic, the Liouville volume coincides with the symplectic volume. For a $G$-conjugacy class $\Co\cong G/G_g$, the Liouville volume is given by the formula 
\cite[Proposition 3.6]{al:du}
\[ \on{vol}{\Co}=|{\det}_{\g_g^\perp}(1-\Ad_g)|^{1/2} \frac{\on{vol}(G)}{\on{vol}(G_g)},\] 
involving the Riemannian volumes of $G$ and of the stabilizer group $G_g$. 
The spaces $Y^{(z)}$ for $z\in Z$ can be viewed as conjugacy classes for the group $N(T)$, of elements 
$\I$ if $z=e$ and $\J$ if $z=c$. Application of the formula gives 
\[ \on{vol}(Y^{(z)})=\begin{cases} 2 & \mbox{ if } z=e\\ 1& \mbox{ if } z=c\end{cases}.\]
This is obvious for $z=e$, while for $z=c$ (so that $g=\J,\ N(T)_g=\Z_4$) we have  
$|\det_{\t}(1-\Ad_\J)|^{1/2}=\sqrt{2}$ (since $\Ad_{\J}$ acts as $-1$ on $\t$), 
$\on{vol}(N(T))=2\on{vol}(T)=2 ||\alpha||=2\sqrt{2}$, and $\on{vol}(N(T)_g)=4$. 
It follows that 
\[ \on{vol}(\ti{F}^{(\gamma)})=\prod_{i=1}^r \on{vol}(Y^{(\gamma_i)})=2^{r-\mathsf{l}(\gamma)}.\]  
Since the moment map for the quasi-Hamiltonian $N(T)$-space $\ti{F}^{(\gamma)}$ takes values in $T$, 
this coincides with the symplectic volume. Since $2^{r-\mathsf{l}(\gamma)}$ is also the number of components 
of $\ti{F}^{(\gamma)}$, it follows that each component has volume $1$. 
\end{proof}

\subsection{Fixed point contributions}
In this Section, we assume that $M=(\D_*\times\cdots\times\D_*)/\Gamma$ carries a level $k$ pre-quantization. Thus $k\in 2\N$ if $r=2$ 
and $k\in 4\N$ if $r>2$. 
Our aim is to compute the fixed point contributions to $\mathcal{Q}(M)(t)$, as described in 
formula (\ref{localization}), for $t=t_l$, $l=0, \ldots, k$. 

If $t\neq \I$, Proposition \ref{fixedpointsets} shows that $M^t=F^{(\varepsilon)}$ consists of 
just two points, covered by the set $\ti{M}^t=\ti{F}^{(\varepsilon)}$ (consisting of 
$2^r$ points). The fixed point contribution of $F^{(\varepsilon)}$ is just that for $\ti{F}^{(\varepsilon)}$, divided 
by $|\Gamma|=2^{r-1}$. Hence 
\[ \mathcal{Q}(M)(t)=2^{1-r}\mathcal{Q}(\ti{M}^t)=2^{1-r}\mathcal{Q}({\mathcal{D}_*})^r(t),\]
with $\ca{Q}(\D_*)=\tau_{k/2}$ \cite[Proposition 11.2]{me:su} . 

If $t=\I$, $\ca{Q}(M)(\I)$ is a sum over the contributions from all $F^{(\gamma)},\ \gamma\in \Gamma$. 
The contribution from $F^{(\varepsilon)}$ is ${2^{1-r}}(\mathcal{Q}({\mathcal{D}_*})(\I))^r$,
as before. Calculation of the contributions from  $F=F^{(\gamma)},\ \gamma\not=\varepsilon$ requires more work:
\begin{prop} \label{contributions} 
The contribution of the fixed point manifold $F=F^{(\gamma)},\ \gamma\not=\varepsilon$ to 
$\ca{Q}(M)(\I)$ is  
\[ \int_F \frac{\widehat{A}(F)\ \on{Ch}(\ca{L}_F,\, \I)^{1/2}}{D_\R(\nu_F,\I) }
=2^{1-r}\left(\tfrac{k}{2}+1 \right)^{\mathsf{l}(\gamma)/2}\varphi^{(\gamma)},\]
where the scalar $\varphi^{(\gamma)}=\mu_{F^{(\gamma)}}(\I)\in\on{U}(1)$ is the action of $\I$ on the pre-quantum line bundle over $F^{(\gamma)}$.
\end{prop}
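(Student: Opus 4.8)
The plan is to compute the localization integrand directly on the connected torus $F=F^{(\gamma)}$, using the covering $\ti F^{(\gamma)}\to F^{(\gamma)}$ of degree $2^{\mathsf{l}(\gamma)-1}$ to reduce everything to a single component of $\ti F^{(\gamma)}$, which is a torus of dimension $\mathsf{l}(\gamma)$ with trivial tangent bundle (hence $\widehat{A}(F)=1$ and the integral reduces to extracting the top-degree part of the remaining factors). First I would analyze the normal bundle $\nu_F$: the tangent space to $\ti M=\D_*\times\cdots\times\D_*$ splits as a product of $T_{g_i}\D_*$, and at a point of $\ti F^{(\gamma)}$ each factor with $\gamma_i=c$ contributes one tangent direction to $\ti F^{(\gamma)}$ and one normal direction, while each factor with $\gamma_i=e$ (i.e.\ $g_i\in\{\I,\I^{-1}\}$) is entirely normal and two-dimensional. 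On each normal direction the element $\I=\exp(\rho/2)$ acts by a definite rotation angle, so $D_\R(\nu_F,\I)$ is a product of elementary factors $|2\sin(\text{angle})|$-type expressions coming from the weights of the $T$-action on $\nu_F$; I would read these off from the conjugation action, using $\Ad(\I)g=cg$ on the $Y^{(c)}$-factors and $\Ad(\I)$ fixing $Y^{(e)}=\{\I,\I^{-1}\}$. The factor $\on{Ch}(\ca L_F,\I)^{1/2}$ contributes, first, the scalar $\mu_{F^{(\gamma)}}(\I)^{1/2}\cdot(\text{sign/phase normalization})$ — which is where $\varphi^{(\gamma)}$ enters — and, second, a characteristic-class factor involving the symplectic/curvature form of $F$, whose top-degree part integrates via Lemma \ref{restofform} and its corollary $\on{vol}(F^{(\gamma)})=2^{1-\mathsf l(\gamma)}$.

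The key computational steps, in order: (1) identify the $T$-weights on $\nu_F$ and the $T$-action on $\ca L_F$, thereby writing the integrand as $\varphi^{(\gamma)}$ times an explicit product of scalars times the curvature form; (2) observe that the only nonzero contribution to $\int_F$ comes from the $\mathsf{l}(\gamma)$-form part, which by Lemma \ref{restofform} contributes the symplectic volume $\on{vol}(F^{(\gamma)})=2^{1-\mathsf l(\gamma)}$ once the curvature is normalized as the pre-quantum (level $k$ times symplectic) form — this is where the power $2^{1-r}$ and part of the power of $(\tfrac k2+1)$ originate, the latter because the level enters as $k$ and $k/2+1$ is the relevant combination appearing already in the Kac--Peterson normalization; (3) combine the $D_\R(\nu_F,\I)$ denominator, evaluated at $\I$ using $||\rho||^2=\tfrac12$ and $q=e^{i\pi/(k+2)}$, with the numerator scalars to see the sines collapse and leave exactly $(\tfrac k2+1)^{\mathsf l(\gamma)/2}$. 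A useful consistency check is $\gamma=\varepsilon$: there $\mathsf l(\gamma)=0$, $F^{(\varepsilon)}$ is two points, and the formula should reduce to $2^{1-r}(\ca Q(\D_*)(\I))^r$, matching the contribution already computed; running the same weight bookkeeping in that degenerate case validates the normalization constants.

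The main obstacle I expect is pinning down the normal-bundle data and the $K$-theoretic normalization of $\on{Ch}(\ca L_F,\I)^{1/2}$ precisely enough that \emph{all} the powers of $2$ and of $(\tfrac k2+1)$ come out exactly as stated — in particular, disentangling which factors of $2$ come from the $2^{r-\mathsf l(\gamma)}$ components of $\ti F^{(\gamma)}$, which from $|\Gamma|=2^{r-1}$, which from the $\sqrt2$ in $|\det_\t(1-\Ad_\J)|^{1/2}$, and which from $||\rho||^2=\tfrac12$. Equivariant-index conventions for twisted $\on{Spin}_c$-structures carry sign and square-root ambiguities, so I would fix them by appealing to the single-factor normalizations in \cite{me:su} (the value $\ca Q(\D_*)=\tau_{k/2}$ and the fixed-point formula \cite[Theorem 9.5]{me:su}) and propagate them multiplicatively through the fusion-product structure of $\ti F^{(\gamma)}=Y^{(\gamma_1)}\times\cdots\times Y^{(\gamma_r)}$ viewed as a product of $N(T)$-conjugacy classes, exactly as in the proof of Lemma \ref{restofform}. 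The scalar $\varphi^{(\gamma)}$ itself I would leave abstract, as its explicit evaluation is deferred to a later lemma.
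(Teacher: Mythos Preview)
Your overall architecture is right — compute $\widehat A(F)$, $D_\R(\nu_F,\I)$, and $\on{Ch}(\ca L_F,\I)^{1/2}$ separately on the torus $F^{(\gamma)}$ and combine — but you have misidentified where the factor $(\tfrac{k}{2}+1)^{\mathsf l(\gamma)/2}$ actually comes from, and this would cause the computation to fail.

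The $D_\R$-class carries \emph{no} $k$-dependence. The element $\I$ is a fixed group element (not merely ``the special point $t_{k/2}$''), and its action on the normal bundle is determined by $\Ad(\I)$, which has order $2$: it acts as $-1$ on every normal direction. Thus each ``$2\sin(\text{angle})$'' factor is just $2$ (with a phase), and $D_\R(\nu_F,\I)=(2i)^{r-\mathsf l(\gamma)/2}$. There are no sines involving $q=e^{i\pi/(k+2)}$ to collapse. Your step (3) is therefore based on a wrong premise.

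The level enters only through $\on{Ch}(\ca L_F,\I)^{1/2}$, and the key point you are missing is the $k\mapsto k+2$ shift: the $\on{Spin}_c$-line bundle $\ca L_F$ is not the level $k$ pre-quantum bundle but the level $2k+4$ one, because it is built from the level $k$ pre-quantization \emph{together with} the canonical twisted $\on{Spin}_c$-structure (Property (g) in Section \ref{subsec:preq}). Hence $\tfrac{1}{2}c_1(\ca L_F)=(k+2)[\om_F]$, not $\tfrac{k}{2}[\om_F]$, and $\int_F \exp\big(\tfrac{1}{2}c_1(\ca L_F)\big)=(k+2)^{\mathsf l(\gamma)/2}\on{vol}(F^{(\gamma)})=2^{1-\mathsf l(\gamma)/2}(\tfrac{k}{2}+1)^{\mathsf l(\gamma)/2}$, using Lemma \ref{restofform}. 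The remaining power of $2$, namely $2^{\mathsf l(\gamma)/2-r}$, comes from the denominator $D_\R(\nu_F,\I)$; the factors of $i$ there cancel against $\det_\C(A^{1/2})=i^{r-\mathsf l(\gamma)/2}$ arising in the sign determination of $\sigma(\ca L_F)(\I)^{1/2}=\varphi^{(\gamma)}\det_\C(A^{1/2})$, which is the point where the square-root ambiguity you worry about is resolved. With these corrections the bookkeeping is clean and no appeal to single-factor normalizations from \cite{me:su} is needed.
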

\begin{proof}
Since $F=F^{(\gamma)}$ is a torus, $\widehat{A}(F)=1$. To compute the $D_\R$-class, note that 
the normal bundle of $T\J$ in $\D_*$ is an orientable real line bundle, hence it is trivializable.  
Consequently, the normal bundle $\nu_{\ti{F}^{(\gamma)}}$ to $\ti{F}^{(\gamma)}$ in $\ti{M}$ 
is trivializable, and thus the normal bundle $\nu_{F} = \nu_{\ti{F}^{(\gamma)}}/\Gamma$ to
$F$ in $M$ is a flat Euclidean vector bundle of rank $2r-\mathsf{l}(\gamma)$.  The element $\I$ acts by multiplication by $-1$ on the fibers of $\nu_{F} $, since 
$\Ad(\I)$ has order $2$ and cannot act trivially.  By definition of the $D_\R$-class 
(see \cite[Section 2.3]{al:fi} or \cite[Section 5.3]{me:twi}), it follows that 
$$
D_\R(\nu_{F}, \I) = i^{\on{rank}(\nu_F)/2} {\det}_\R^{1/2}(1-(-1))=
(2i)^{r-\tfrac{\mathsf{l}(\gamma)}{2}}.
$$

By  \cite[Proposition 9.3]{me:su}, the restriction $TM|_F$ inherits a distinguished $\on{Spin}_c$-structure (depending on the 
choice of level $k$ pre-quantization), equivariant for the action of $\I$.  
The line bundle $\mathcal{L}_{F} \to F$ is 
the $ \on{Spin}_c$-line bundle associated to this $ \on{Spin}_c$-structure, and 
\[\on{Ch}(\ca{L}_F,\, \I)^{1/2}=\sigma(\mathcal{L}_{F})(\I)^{1/2}\exp(\tfrac{1}{2} c_1(\mathcal{L}_{F}))\] is the square root of its equivariant Chern character, with $\sigma(\mathcal{L}_{F})(\I)\in\on{U}(1)$ the action of $\I$ the $\on{Spin}_c$-line 
bundle. As discussed in \cite[Section 2.3]{al:fi} (see also
\cite[Section 5.3]{me:twi}), the sign of the square root is determined
as follows. Since $\Phi$ restricts to a surjective map $F\to T$, the
fixed point set $F$ meets $\Phi^{-1}(e)$. Pick any $x\in F\cap
\Phi^{-1}(e)$. Observe that $\omega$ is non-degenerate at points of
$\Phi^{-1}(e)$, and choose a $\I$-invariant compatible complex
structure to view $T_xM$ as a Hermitian vector space. Let $A\in
\on{U}(T_xM)$ be the transformation defined by $\I$ and $A^{1/2}$ its
unique square root for which all eigenvalues are of the form
$e^{iu}$ with $0\le u<\pi$.  Then
\[ \sigma(\mathcal{L}_{F})(\I)^{1/2}=\varphi^{(\gamma)} {\det}_\C(A^{1/2}).\]
Since $\I$ acts trivially on $T_mF$ and as $-1$ on the normal bundle, the transformation $A^{1/2}$ acts trivially on  $T_xF$ and as $i$ on the normal bundle. Thus ${\det}_\C(A^{1/2})=i^{r-\mathsf{l}(\gamma)/2}$, which cancels a similar factor in the expression for the $D_\R$-class. 

It remains to find the integral $\int_{F}\exp(\tfrac{1}{2} c_1(\mathcal{L}_{F}))$. To this end, we 
interpret $\ca{L}_F$ as a pre-quantum line bundle. By the same argument as in Property (g) of Section \ref{subsec:preq}, (see also \cite[Section 11.1]{me:su}),  the level $k$ pre-quantization and the canonical twisted $\on{Spin}_c$-structure
on $M$ combine to give an element of $H^3(\Phi\,;\Z)$ at level $2k+4$. 
Since $H^2(M\,;\R)=0$, this element defines a pre-quantization at level $2k+4$. 
Pull-back to $F$ defines a level $2k+4$ pre-quantization of $F$, with $\ca{L}_F$ as the
pre-quantum line bundle. Hence $c_1(\mathcal{L}_F)$ is 
the $2k+4$-th multiple of the class of the symplectic form on $F$. It follows 
that 
\[ \int_{F}\exp(\tfrac{1}{2} c_1(\mathcal{L}_{F}))=(k+2)^{\mathsf{l}(\gamma)}\on{vol}(F)=2^{1-\mathsf{l}(\gamma)/2} \left(\tfrac{k}{2}+1 \right)^{\mathsf{l}(\gamma)/2}\]
where we have used Lemma \ref{restofform}. 
\end{proof}

The phase factors $\varphi^{(\gamma)}$ depend on the choice of pre-quantization. 
Recall again that the set of pre-quantizations of a quasi-Hamiltonian $\SU(2)$-space is a torsor under the 
group of isomorphism classes of flat line bundles. In our case this is the group 
\[ \on{Tor}(H^2(M\,;\Z))\cong \on{Hom}(\Gamma,\on{U}(1)).\] 
The homomorphism $\psi\colon \Gamma \to \on{U}(1)$ defines the flat line bundle 
$\ti{M}\times_\Gamma \C_\psi$, where $\C_\psi$ is the 1-dimensional $\Gamma$-representation 
defined by $\psi$.  Changing the pre-quantization by such a flat line bundle  changes $\varphi^{(\gamma)}$ for 
$F=F^{(\gamma)}$ to  $\psi(\gamma)\varphi^{(\gamma)}$. By Property (g) of Section \ref{subsec:preq}, 
and since $H^2(M;\R)=0$, there is a \emph{distinguished} pre-quantization at any level $k\in 4\N$. Hence, the inequivalent pre-quantizations at level $k\in 4\N$ are labeled by 
$\on{Hom}(\Gamma,\on{U}(1))$. 

\begin{lemma} If $r\ge 3$ and $k\in 4\N$, the phase factor for the pre-quantization labeled by $\psi\in \on{Hom}(\Gamma,\on{U}(1))$ is given by 
\[ \varphi^{(\gamma)} = (-1)^{\tfrac{k}{4}(r-\mathsf{l}(\gamma)/2)}\psi(\gamma).\]

\end{lemma}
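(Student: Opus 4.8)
The plan is to pin down the distinguished pre-quantization via its defining property (Property (g) of Section \ref{subsec:preq}): it is the one whose associated class in $H^3(\Phi;\Z)$ restricts to $4x$ in $H^3(\SU(2);\Z)$, and at a general level $k\in 4\N$ it is the $\f{k}{4}$-th `power' of this canonical level $4$ pre-quantization (since $H^2(M;\R)=0$, pre-quantizations at a fixed level are rigid up to torsion, and the distinguished one is additive in $k$ along the lattice $4\N$). So $\varphi^{(\gamma)}$ for the distinguished level $k$ pre-quantization equals $\big(\varphi^{(\gamma)}_4\big)^{k/4}$ where $\varphi^{(\gamma)}_4$ is the phase factor at level $4$. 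Thus the whole lemma reduces to the single computation $\varphi^{(\gamma)}_4=(-1)^{r-\mathsf{l}(\gamma)/2}$ at level $4$, after which multiplying by an arbitrary $\psi\in\on{Hom}(\Gamma,\on{U}(1))$ accounts for the non-distinguished pre-quantizations as explained in the paragraph preceding the lemma.

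First I would identify $\varphi^{(\gamma)}$ concretely as $\mu_{F^{(\gamma)}}(\I)$, the action of $\I$ on the fiber of the pre-quantum line bundle $\ca{L}_F$ over a chosen point $x\in F^{(\gamma)}\cap\Phi^{-1}(e)$ — this is exactly the scalar isolated in the proof of Proposition \ref{contributions}. The key geometric input is that $\ca{L}_F$ is the restriction to $F$ of the pre-quantum data on $\ti M=\D_*\times\cdots\times\D_*$, descended through $\Gamma$; so $\varphi^{(\gamma)}$ is a product over the $r$ factors of the action of $\I$ on the pre-quantum line bundle over $Y^{(\gamma_i)}\subset\D_*$, twisted by the $\Gamma$-equivariant structure. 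For the factors with $\gamma_i=e$ (there are $r-\mathsf{l}(\gamma)$ of them) the relevant point is $\I\in Y^{(e)}=\{\I,\I^{-1}\}$ and one reads off the $\I$-action on the level $k$ pre-quantum line bundle of $\D_*=\SU(2).\exp(\f12\rho)$ at its fixed point $\I$; for $\gamma_i=c$ the point lies on $T\J$ and $\I$ acts there through the translation/Weyl structure. I expect each $\gamma_i=e$ factor to contribute a sign $(-1)^{k/4}$ (equivalently $i^{k/2}$, consistent with the $\on{Spin}_c$ normalization $\exp(\f12 m\rho)$ with $m=k/2$ for $\D_*$ and with the half-integer shifts appearing in the Kac–Peterson formula), and the $\gamma_i=c$ factors to contribute trivially to the phase, which yields $\varphi^{(\gamma)}=(-1)^{(k/4)(r-\mathsf{l}(\gamma))}$ up to the $\psi$-twist — matching the claimed exponent once one is careful that the half-integer in $r-\mathsf{l}(\gamma)/2$ comes from the square-root $\on{Ch}^{1/2}$ convention (so the honest exponent is $\f{k}{4}(r-\mathsf{l}(\gamma)/2)$ with the $\mathsf{l}(\gamma)/2$ piece a contribution of the $\det_\C(A^{1/2})$ normal-bundle factor already exhibited in Proposition \ref{contributions}).

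The main obstacle will be tracking the $\Gamma$-equivariant square roots correctly: the canonical twisted $\on{Spin}_c$-structure lives at level $2k+4$, its square root enters $\on{Ch}(\ca{L}_F,\I)^{1/2}$, and the phase $\varphi^{(\gamma)}$ is precisely the ambiguity in choosing that square root compatibly over all of $F^{(\gamma)}$ rather than at a single point. Concretely I would (i) fix the level $4$ canonical pre-quantization, compute its restriction to each $Y^{(z)}$ as a $\Z_2$- (or $\Z_4$-) equivariant line bundle on a point/circle, (ii) multiply these over the $r$ factors and push down through the free $\Gamma$-action, checking the cocycle condition that makes the product descend, and (iii) extract the $\I$-action, getting $(-1)^{r-\mathsf{l}(\gamma)/2}$ at level $4$. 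Then raising to the $\f{k}{4}$ power and twisting by $\psi$ gives the stated formula. The delicate sign bookkeeping in step (ii) — precisely which $\Gamma$-character the pre-quantum line bundle over $\D_*^r$ carries, so that the descended bundle is well defined and the distinguished pre-quantization is the $\psi\equiv 1$ case — is where I expect to spend most of the effort.
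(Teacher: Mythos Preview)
Your overall reduction strategy---compute $\varphi^{(\gamma)}$ for the distinguished pre-quantization at level $4$, then take the $k/4$-th power and twist by $\psi$---is exactly the paper's. But your plan for the level-$4$ computation itself is both harder than necessary and, as written, produces the wrong answer.

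The paper's observation is that at level $4$ the distinguished pre-quantization \emph{is} the canonical twisted $\on{Spin}_c$-structure (Property (g)). Hence the pre-quantum line bundle over $F^{(\gamma)}$ is the $\on{Spin}_c$-line bundle, and the action of $\I$ on it is $\det_\C(A)$ by the standard identification of the $\on{Spin}_c$-line with the complex determinant. Since Proposition \ref{contributions} already computed $\det_\C(A^{1/2})=i^{r-\mathsf{l}(\gamma)/2}$, one squares to get $\varphi^{(\gamma)}_4=\det_\C(A)=(-1)^{r-\mathsf{l}(\gamma)/2}$ in one line. That's the whole proof.

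Your proposed factorization over the $r$ copies of $\D_*$ runs into a real obstacle: for a factor with $\gamma_i=c$, the normal bundle to $Y^{(c)}=T\J$ in $\D_*$ has \emph{odd} real rank $1$ (as noted in the proof of Proposition \ref{contributions}), so there is no well-defined per-factor complex determinant to multiply. This is why your heuristic yields the exponent $r-\mathsf{l}(\gamma)$ rather than the correct $r-\mathsf{l}(\gamma)/2$: the $\gamma_i=c$ factors do not contribute trivially, they contribute ``half'' a sign each, which only makes sense after pairing them (possible since $\mathsf{l}(\gamma)$ is even). Your attempted patch---attributing the missing $\mathsf{l}(\gamma)/2$ to the $\det_\C(A^{1/2})$ normalization---is incorrect: that factor has already been split off from $\varphi^{(\gamma)}$ in the formula $\sigma(\ca{L}_F)(\I)^{1/2}=\varphi^{(\gamma)}\det_\C(A^{1/2})$ of Proposition \ref{contributions}, so it cannot reappear inside $\varphi^{(\gamma)}$. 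The fix is not to patch the factorization but to abandon it in favor of the global $\det_\C(A)$ argument.
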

\begin{proof} 
The phase factor $\varphi^{(\gamma)}$ for the distinguished pre-quantization at level $4$ is given 
by ${\det}_\C(A)=(-1)^{r-\mathsf{l}(\gamma)/2}$, in the notation from the proof of Proposition 
\ref{contributions}. For the distinguished pre-quantization at level 
$k\in 4\N$, we have to take the $\tfrac{k}{4}$-th power of this number, and changing the pre-quantization by   $\psi$ we have to multiply by $\psi(\gamma)$. 
\end{proof}

If $r=2$, there are $|\Gamma|=2$ distinct pre-quantizations at all even levels $k\in 2\N$, 
related by elements $\psi\in \on{Hom}(\Gamma,\on{U}(1))$. 
Aside from the discrete fixed point set $F^{(\varepsilon)}$, there is a single non-discrete fixed point component $F^{(\gamma)}$ of $\I$, given by $\gamma=(c,c)$. The non-trivial homomorphism 
$\psi\in \on{Hom}(\Gamma,\on{U}(1))\cong\Z_2$ satisfies $\psi(c,c)=-1$, 
hence the weight $\varphi^{(\gamma)}$ 
is equal to $1$ for one of the pre-quantizations and $-1$ for the other. 

\subsection{Quantization of $M$}\label{subsec:quant}

We are now ready to summarize our computation of $\ca{Q}(M)$ for $M=(\D_*\times\cdots \times \D_*)/\Gamma$. Assuming that $k$ is even, recall that $\D_*$ has a unique pre-quantization at level $k$, and 
$\ca{Q}(\D_*)=\tau_{k/2}$. Define an element 
$$
\chi = \tau_0 - \tau_2 + \tau_4 - \cdots + (-1)^{k/2} \tau_k\in R_k(\SU(2)). 
$$
By the orthogonality relations for $R_k(\SU(2))$, 
this element satisfies $\chi(\I) = (\tfrac{k}{2}+1)$ and $\chi(t)=0$ for $t=t_l$, 
$l\neq k/2$. 
Hence we may write the sum over the fixed point contributions as follows: 
\[ \ca{Q}(M)(t)=2^{1-r} \Big(\tau_{k/2}(t)^r+\chi(t) \sum_{\gamma \in \Gamma \setminus \{\varepsilon \}}
\left( \tfrac{k}{2} +1\right)^{\mathsf{l}(\gamma)/2-1} \varphi^{(\gamma)} \Big)\]

\begin{thm} \label{th:main}
Consider the quasi-Hamiltonian $\SU(2)$-space $M=(\D_*\times\cdots
\times\D_*)/\Gamma$ with $r\ge 2$ factors, where $\Gamma\subset Z^r$
consists of all $\gamma=(\gamma_1,\ldots,\gamma_r)$ with
$\prod_{i=1}^r \gamma_i=e$.
\begin{enumerate}
\item
If $r\ge 3$, the space $M$ is pre-quantized at level $k$ if and only if $k\in 4\N$. The different pre-quantizations 
are indexed by the elements $\psi\in \on{Hom}(\Gamma,\on{U}(1))$, and the corresponding level $k$ quantization is 
given by the formula,
$$
\mathcal{Q}_\psi (M) = 2^{1-r} \Big((\tau_{k/2})^r + \chi \sum_{\gamma \in \Gamma \setminus \{\varepsilon \}}   \psi(\gamma) ( \tfrac{k}{2} +1)^{\f{\mathsf{l}(\gamma)}{2}-1} (-1)^{\tfrac{k}{4} (r-\f{\mathsf{l}(\gamma)}{2})} \Big).
$$
\item
If $r=2$, the space $M$ is pre-quantized at level $k$ if and only if $k\in 2\N$. At any such level, there
are two distinct pre-quantizations indexed by the action $\pm 1$ of $\I$ on the pre-quantum line bundle over $F^{(\gamma)}$, for $\gamma=(c,c)$. 
The corresponding level $k$ quantizations of $M$ are 
$$
\mathcal{Q}_\pm(M) = \frac{1}{2}\left((\tau_{k/2})^2 \pm  \chi \right).
$$
\end{enumerate}
\end{thm}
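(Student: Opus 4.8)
The plan is to assemble the statement from the results of the preceding subsections; essentially no new computation is required. The pre-quantization conditions in both parts are immediate from Theorem~\ref{th:preq} applied to $h=0$, $s=r$, and $\D_1=\cdots=\D_r=\D_*=\SU(2).\exp(\tfrac{1}{2}\rho)$: this forces $k\in 2\N$ always, and $k\in 4\N$ when $r\ge 3$. The classification of pre-quantizations was also settled above: since $H^2(M;\R)=0$ (shown in the proof of Theorem~\ref{th:preq}), the torsor of pre-quantizations under $\on{Tor}(H^2(M;\Z))\cong\on{Hom}(\Gamma,\on{U}(1))$ acquires a base point from Property~(g) when $r\ge 3$ and $k\in 4\N$, so the pre-quantizations are indexed by $\psi\in\on{Hom}(\Gamma,\on{U}(1))$; when $r=2$ the group $\Gamma\cong\Z_2$ gives exactly two, distinguished by the scalar $\varphi^{(c,c)}\in\{\pm1\}$ with which $\I$ acts on the pre-quantum line bundle over $F^{(c,c)}$.

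For the quantization formulas I would start from the summation formula for $\ca{Q}(M)(t)$ displayed just before the theorem, which already collects the contributions of all fixed point components $F^{(\gamma)}$, $\gamma\in\Gamma$, using the element $\chi=(\tfrac{k}{2}+1)\ti\tau_{k/2}$ to package the fact (Proposition~\ref{fixedpointsets}) that the non-discrete components occur only over the special point $\I=t_{k/2}$. For $r\ge 3$ and the pre-quantization labeled by $\psi$, I substitute the phase factor $\varphi^{(\gamma)}=(-1)^{\tfrac{k}{4}(r-\mathsf{l}(\gamma)/2)}\psi(\gamma)$ computed in the Lemma on phase factors; the resulting expression for $\ca{Q}_\psi(M)(t)$ is exactly the evaluation at $t=t_l$ of the right-hand side claimed in part~(1). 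For $r=2$ the only non-discrete component is $F^{(c,c)}$ with $\mathsf{l}=2$, so $(\tfrac{k}{2}+1)^{\mathsf{l}/2-1}=1$ and the displayed formula collapses to $\ca{Q}_\pm(M)(t)=\tfrac{1}{2}\big(\tau_{k/2}(t)^2\pm\chi(t)\big)$, which is the evaluation of the formula in part~(2). Along the way one uses $\ca{Q}(\D_*)=\tau_{k/2}$ from \cite[Proposition~11.2]{me:su} to identify the discrete term $2^{1-r}\ca{Q}(\D_*)^r$ with $2^{1-r}(\tau_{k/2})^r$.

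To finish, I invoke the fact that an element of $R_k(\SU(2))\otimes_\Z\C$ is determined by its values at the $k+1$ special points $t_0,\ldots,t_k$, since the basis $\{\ti\tau_l\}$ satisfies $\ti\tau_m(t_l)=\delta_{m,l}$. Having matched the two sides of each formula at every $t_l$, I conclude they agree in $R_k(\SU(2))$; in particular this forces the a priori rational-looking right-hand sides (with their prefactor $2^{1-r}$) to lie in the integral ring $R_k(\SU(2))$, since the left-hand sides $\ca{Q}_\psi(M)$ do by construction.

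There is no genuinely hard step here: the real work was carried out in Theorem~\ref{th:preq}, Propositions~\ref{fixedpointsets} and \ref{contributions}, Lemma~\ref{restofform}, and the Lemma on phase factors. The one place to be careful is bookkeeping --- keeping track of the three competing powers of $2$ (the order $|\Gamma|=2^{r-1}$, the number $2^{r-\mathsf{l}(\gamma)}$ of components of $\ti F^{(\gamma)}$, and the factor $2^{1-\mathsf{l}(\gamma)/2}$ coming from $\int_F\exp(\tfrac12 c_1(\ca{L}_F))$) and confirming that they combine into the single prefactor $2^{1-r}$ together with the stated power of $\tfrac{k}{2}+1$.
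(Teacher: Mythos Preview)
Your proposal is correct and mirrors exactly what the paper does: Theorem~\ref{th:main} is stated as a summary with no separate proof environment, the content having been established in Theorem~\ref{th:preq}, Propositions~\ref{fixedpointsets} and \ref{contributions}, Lemma~\ref{restofform}, the Lemma on phase factors, and the displayed formula for $\ca{Q}(M)(t)$ immediately preceding the theorem. Your explicit remark that equality at all special points $t_l$ forces equality in $R_k(\SU(2))$ makes the passage from the pointwise identity to the ring identity a bit more explicit than the paper bothers to, but the argument is the same.
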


\subsection{Multiplicity computations}
Being elements of $R_k(\SU(2))$, the coefficients of $\ca{Q}(M)$ in its decomposition with respect to the basis 
$\tau_0,\ldots,\tau_k$ must be integers. In this Section, we will compute these multiplicities for small $r$. 
\subsubsection{$r=2$ factors}
Assume $k\in 2\N$, and let $\ca{Q}_\pm(M)$ be the quantizations corresponding to the pre-quantizations labeled by $\pm 1$. The multiplication rules for level $k$ characters give 
\[ (\tau_{k/2})^2=\tau_0+\tau_2+\ldots+\tau_k.\]
Hence, if $k\in 4\N$ we obtain 
\[\begin{split} 
\ca{Q}_+(M)&=\tau_0+\tau_4+\ldots+\tau_{k},\\
\ca{Q}_-(M)&=\tau_2+\tau_6+\ldots+\tau_{k-2},\\
\end{split}\] 
while for $k\in 4\N-2$,
\[\begin{split} 
\ca{Q}_+(M)&=\tau_0+\tau_4+\ldots+\tau_{k-2},\\
\ca{Q}_-(M)&=\tau_2+\tau_6+\ldots+\tau_{k}.\\
\end{split}\] 
 
\subsubsection{$r=3$ factors}
Let $\ca{Q}_\psi(M)$ denote the level $k\in 4\N$ pre-quantization indexed by 
$\psi\in \on{Hom}(\Gamma,\on{U}(1))$. 
Since $r=3$, $\mathsf{l}(\gamma)=2$ for any $\gamma \neq \varepsilon$ and the quantization formula simplifies to:
$$
\mathcal{Q}_\psi(M) = \frac{1}{4} \Big( \tau_{2m}^3 + \chi\,\sum_{\gamma \neq\varepsilon}  \psi(\gamma)\Big).
$$
For the trivial homomorphism $\psi=1$,  we have $\sum_{\gamma\neq \varepsilon} \psi(\gamma) =3$, while for a non-trivial homomorphism $\psi\neq 1$, $\sum_{\gamma\neq \varepsilon} \psi(\gamma) =-1$. 
We have, 
\[ (\tau_{k/2})^3=\tau_0+3\tau_2+\ldots+(\tfrac{k}{2}+1)\tau_{k/2}
+\ldots+3\tau_{k-2}+\tau_k.\]
We therefore obtain 
\[ \begin{split}
\mathcal{Q}_\psi(M)&=(\tau_0+2\tau_4+3\tau_8+
\ldots 3\tau_{k-8}+2\tau_{k-4}+\tau_k)\\
&\ \ \ +(\tau_6+2\tau_{10}+\ldots+2\tau_{k-10}+\tau_{k-6})\ \ \ \ \ \ \ \ \ \ \ \ \ \ \ \ \ \ \ \ \ \ \ \mbox{ if $\psi=1$},\\
\mathcal{Q}_\psi(M)&=(\tau_0+\tau_4+2\tau_8+\ldots+3\tau_{k-8}+2\tau_{k-4}+\tau_k)\\
&\ \ \ +(\tau_2+2\tau_6+3\tau_{10}+\ldots+3\tau_{k-10}+2\tau_{k-6}+\tau_{k-2})
\ \ \ \ \mbox{ if $\psi\neq 1$}.
\end{split}\]
Note that the coefficients are symmetric about the midpoint $\tfrac{k}{2}$ of the interval $[0,k]$. 
In closed form, $\ca{Q}_\psi(M)=\sum_{j=0}^{k/2} a_{2j} \tau_{2j}$, where 
\[ a_{2j}=\begin{cases} \tfrac{1}{4} (2j+1+ 
(4\delta_{\psi,1}-1)(-1)^j)& \colon \ 2j\le k/2,\\
\tfrac{1}{4} (k-2j+1+(4 \delta_{\psi,1}-1)(-1)^j)
& \colon \ 2j\ge k/2.
\end{cases}\]

\subsubsection{$r=4$ factors} 
If $r=4$ we have $|\Gamma|=8$. There is a unique element $\gamma'\in \Gamma$ with $\mathsf{l}(\gamma')=4$, and 
$\mathsf{l}(\gamma)=2$ for $\gamma\neq \gamma',\varepsilon$. Hence 
we may write the quantization formula for levels $k\in 4\N$ as:
$$
\mathcal{Q}_\psi(M) = \frac{1}{8} \Big(\tau_{k/2}^4 + \big(   \psi(\gamma')\left(\tfrac{k}{2}+1 \right) 
+(-1)^{k/4} 
\sum_{\mathsf{l}(\gamma)=2} \psi(\gamma)
\big) \chi \Big).
$$
One finds that there are $4$ homomorphisms $\psi$ with $\sum_{\mathsf{l}(\gamma)=2} \psi(\gamma)=0,\ 
\psi(\gamma')=-1$ and $3$ homomorphisms with $\sum_{\mathsf{l}(\gamma)=2} \psi(\gamma)=-2,\ 
\psi(\gamma')=1$. Of course, $\sum_{\mathsf{l}(\gamma)=2} \psi(\gamma)=6,\ 
\psi(\gamma')=1$ for $\psi=1$.  
Therefore,  we have 
$$
\mathcal{Q}_\psi(M) = 
\begin{cases}
\frac{1}{8} \left(\tau_{k/2}^4 + \left(6 (-1)^{k/4} 
 +  \left(\tfrac{k}{2}+1 \right) \right) \chi \right)& \colon\ \psi=1\\
 \frac{1}{8} \left(\tau_{k/2}^4 -  
  \left(\tfrac{k}{2}+1 \right) \chi \right)& \colon\ \sum_{\mathsf{l}(\gamma)=2} \psi(\gamma)=0\\
 \frac{1}{8} \left(\tau_{k/2}^4 + \left(2 (-1)^{k/4+1} 
 +  \left(\tfrac{k}{2}+1 \right) \right) \chi \right)&\colon\ \sum_{\mathsf{l}(\gamma)=2} \psi(\gamma)=-2
 \end{cases}
 $$
with 
$$
(\tau_{k/2})^4 = \sum_{j=0}^{k/2} (\tfrac{k}{2}+1-2j^2+jk) \tau_{2j}.
$$
One may verify that the multiplicities of $\tau_{2j}$ in $\ca{Q}_\psi(M)$ are integers, 
as required.

\section{Fuchs-Schweigert} \label{sec:Smatrix}

The formulas appearing in Theorem \ref{th:main} may be rewritten in terms of the so-called $S$-matrix.  
For $z \in Z$, define $S^{(z)}_{m,l}$ by 
\[ S^{(z)}_{m,l}=\begin{cases}1 &\mbox{ if } z=c\\
\mbox S_{m,l}&\mbox{ if } z=e.
\end{cases}\]
In the terminology of \cite{fu:ac}, $S^{(z)}_{m,l}$ is the $S$-matrix of the \emph{orbit Lie algebra} associated to the central element $z$. (This interpretation may seem obscure for $\SU(2)$, 
but becomes natural for higher rank groups.) Consider once again the space $M=\ti{M}/\Gamma$ from 
\eqref{eq:M}. Recall that $\Gamma$ consists of elements
$\gamma=(\gamma_1,\ldots,\gamma_{s+2h})\in Z^{s+2h}$ such that 
$\prod_{j=1}^s \gamma_j=e$, and $\gamma_j=e$ for all $j\le s$ with $\Co_j\neq \Co_*$. 
In particular $|\Gamma|=2^{2h+r-1}$ if $r\ge 1$, while $|\Gamma|=2^{2h}$ if $r=0$. 
To write the Fuchs-Schweigert formula, it is convenient to use the following notation. 
For $\gamma\in\Gamma$, let  $\sum_l^{(\gamma)}$ denote the full sum $\sum_{l=0}^k$ if all
$\gamma_i=e$, and consisting of the single term $l=\f{k}{2}$ if at least one 
$\gamma_i\neq e$. (For higher rank groups, this becomes a sum over level $k$ weights that are fixed
under the action of all $\gamma_i\in Z$ on the set of level $k$
weights.) We will prove the following equivariant analogue to the
Fuchs-Schweigert formula:
\begin{thm} \label{th:FSconjecture}
Suppose the quasi-Hamiltonian $\SU(2)$-space
\[ M=\big(\D_1\times\cdots\times\D_s\times \mathbf{D}(\SU(2))^h\big)/\Gamma\]
is pre-quantized at level $k$. Then 
\begin{equation}\label{eq:fs}
\mathcal{Q}(M) = \frac{1}{|\Gamma|} \sum_{\gamma \in \Gamma} \varphi'(\gamma) 
{\sum_l}^{(\gamma)}\  \frac{S^{(\gamma_1)}_{m_1,l}\cdots S^{(\gamma_s)}_{m_s,l}}{(S_{0,l})^{s+2h}}\ \ti{\tau}_l ,
\end{equation}
where $\varphi'(\gamma)\in\on{U}(1)$ are phase factors depending on the choice of pre-quantization, 
with $\varphi'(\varepsilon)=1$.  
\end{thm}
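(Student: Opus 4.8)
The plan is to deduce Theorem \ref{th:FSconjecture} from the three principal inputs already assembled: the product (fusion) compatibility of quantization, $\ca{Q}(M_1\times M_2)=\ca{Q}(M_1)\ca{Q}(M_2)$; the computations of $\ca{Q}$ for the elementary building blocks ($\SU(2)$-conjugacy classes $\D_j$, the double $\mathbf{D}(\SO(3))$, and the twisted space $M'=(\D_*\times\cdots\times\D_*)/\Gamma'$ in Theorem \ref{th:main}); and the decomposition \eqref{eq:dirprod} of $M$ into a fusion product of these blocks. The strategy is to verify the identity \eqref{eq:fs} by evaluating both sides at the special points $t_l$, $l=0,\ldots,k$, since the $\ti{\tau}_l$ are precisely the idempotents dual to these evaluations ($\ti{\tau}_m(t_l)=\delta_{m,l}$), so an element of $R_k(\SU(2))\otimes\C$ is determined by its values there.

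First I would recall that each $\D_j$ with $\D_j\neq\D_*$ has $\ca{Q}(\D_j)=\tau_{m_j}$, so its value at $t_l$ is $\chi_{m_j}(t_l)=\tfrac{\sin(\pi(l+1)(m_j+1)/(k+2))}{\sin(\pi(l+1)/(k+2))}=S_{m_j,l}/S_{0,l}$; this is the origin of the factor $S^{(e)}_{m_j,l}/S_{0,l}=S_{m_j,l}/S_{0,l}$ attached to non-$\D_*$ legs. Similarly $\ca{Q}(\mathbf{D}(\SO(3)))$ (computed in \cite{me:su}) contributes, at $t_l$, a factor $(S_{0,l})^{-2}$ per handle — matching the exponent $s+2h$ in the denominator of \eqref{eq:fs} — while contributing no numerator $S$-factor, consistent with there being no $m_j$ associated to handles. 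Then, for the $\D_*$-legs, I would invoke Theorem \ref{th:main}: the quantization $\ca{Q}_\psi(M')$ is a sum of $(\tau_{k/2})^r$ and a multiple of $\chi$, and since $\chi(t_l)=(\tfrac{k}{2}+1)\delta_{l,k/2}$ while $\tau_{k/2}(t_l)=S_{k/2,l}/S_{0,l}$, the value of $\ca{Q}_\psi(M')$ at $t_l$ naturally organizes into the $\gamma=\varepsilon$ term (the full sum $\sum_l$, with each $\D_*$-leg giving $S_{k/2,l}/S_{0,l}$, i.e.\ $S^{(e)}_{m_j,l}/S_{0,l}$ with $m_j=k/2$) plus the $\gamma\neq\varepsilon$ terms (each a single $l=k/2$ term, with $S^{(c)}_{m_j,l}=1$ on the $\mathsf{l}(\gamma)$ legs where $\gamma_i=c$ and $S^{(e)}_{k/2,k/2}=S_{k/2,k/2}$ on the remaining ones). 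Matching the combinatorial prefactors — the $2^{1-r}$ and the powers of $\tfrac{k}{2}+1$ in Theorem \ref{th:main} against $|\Gamma|^{-1}$ and the powers of $S_{0,l}$ in \eqref{eq:fs} at $l=k/2$ — fixes the phase factors $\varphi'(\gamma)$ in terms of the $\varphi^{(\gamma)}$ of Proposition \ref{contributions}, with $\varphi'(\varepsilon)=1$ by construction.

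The main obstacle I anticipate is bookkeeping rather than conceptual: one must carefully track how the group $\Gamma$ for the full space $M$ — which by Proposition \ref{pb}(ii) is the amalgam of the $\Gamma'$ coming from the $\D_*$-legs and the $Z\times Z$ factors attached to each handle (using the canonical $\SU(2)$-lift of the $\SO(3)$-commutator map as in Lemma \ref{lem:check}) — reconciles with the $\Gamma$ described just before the theorem, including the count $|\Gamma|=2^{2h+r-1}$ for $r\geq 1$ versus $|\Gamma|=2^{2h}$ for $r=0$, and how the restricted sums $\sum_l^{(\gamma)}$ behave under the fusion product (a full sum occurs only when \emph{every} $\gamma_i=e$, which because of the handle factors forces $\gamma=\varepsilon$ outright once $h\geq 1$ unless one is careful). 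I would handle this by first treating the pure case $h=0$, $s=r$ (Theorem \ref{th:main}), then the pure handle case, then the mixed case via the product formula, each time evaluating at $t_l$ and comparing. A secondary point requiring care is the normalization constant in the Kac--Peterson formula \eqref{Kac-Peterson}: the factor $(\tfrac{k}{2}+1)^{-1/2}=(S_{0,l}$-independent$)$ prefactor must be reconciled with the powers of $(\tfrac{k}{2}+1)$ appearing in Theorem \ref{th:main}, and I expect these to cancel exactly against the $(S_{0,l})^{-(s+2h)}$ denominator when $l=k/2$, since $S_{0,k/2}=(\tfrac{k}{2}+1)^{-1/2}\sin(\tfrac{\pi(k/2+1)}{k+2})=(\tfrac{k}{2}+1)^{-1/2}$. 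Once these identifications are in place, \eqref{eq:fs} follows by linearity, and the non-equivariant formula \eqref{eq:noneqFS} is then obtained by applying the trace $\tau\mapsto\tau^{\SU(2)}$ together with the `quantization commutes with reduction' principle.
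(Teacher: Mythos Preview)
Your proposal is correct and follows essentially the same approach as the paper's own proof: both arguments exploit the fusion-product decomposition \eqref{eq:dirprod} together with the fact that multiplication in $R_k(\SU(2))$ is diagonal in the basis $\ti{\tau}_l$ (equivalently, your ``evaluate at $t_l$'' strategy), and then verify \eqref{eq:fs} block by block---for the $\D_*$-cluster $M'$ via Theorem \ref{th:main}, for the remaining conjugacy classes via $\ca{Q}(\D_j)=\tau_{m_j}$, and for each handle via the known formula for $\ca{Q}(\mathbf{D}(\SO(3)))$ from \cite{me:su}. The paper carries out exactly the identifications you anticipate, including $S_{0,k/2}=(\tfrac{k}{2}+1)^{-1/2}$, $S_{k/2,k/2}=(\tfrac{k}{2}+1)^{-1/2}(-1)^{k/4}$, and $\ti{\tau}_{k/2}=(\tfrac{k}{2}+1)^{-1}\chi$, arriving at the explicit relation $\varphi'(\gamma)=\psi(\gamma)(-1)^{k\mathsf{l}(\gamma)/8}$ for the $\D_*$-block and analogous expressions for the handle block.
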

An explicit description of the phase factors $\varphi'(\gamma)$ will
be given during the course of the proof.  
\begin{proof}[Proof of Theorem \ref{th:FSconjecture}]
The space $M$ is a fusion product of the space $\wt{(\Co_*)^r}$, conjugacy classes $\D_j\neq \D_*$, 
and $h$ factors of $\mathbf{D}(\SO(3))$ (viewed as a quasi-Hamiltonian $\SU(2)$-space). Since the 
fusion product in the basis $\ti{\tau}_m$ is diagonalized, we may verify the formula separately for factors of these three types.  

We begin with the case $h=0,s=r$, with $r\ge 3$ (thus necessarily $k\in 4\N$). We re-write the right hand side of \eqref{eq:fs}, 
separating the term $\gamma=\varepsilon$ from the sum over terms $\gamma\neq\varepsilon$.  
The right hand side of \eqref{eq:fs} becomes 
\begin{equation}\label{eq:fs1}
 \ca{Q}(M)=\frac{1}{|\Gamma|} \Big(\varphi'(\varepsilon)\sum_{l}
\frac{(S_{k/2,l})^r}{(S_{0,l})^{r}}\ti{\tau}_l 
+\sum_{\gamma\neq \varepsilon}\varphi'(\gamma)  \frac{(S_{k/2,k/2})^{r-\mathsf{l}(\gamma)}}{(S_{0,k/2})^{r}}\ \ti{\tau}_{k/2}\Big).
\end{equation}
The sum over $l$ is just $(\tau_{k/2})^r$. 
The element $\chi\in R_k(\SU(2))$ considered in Section 
\ref{subsec:quant} satisfies $\chi(t_l)=(\frac{k}{2}+1)\delta_{l,k/2}$ for 
$l=0,\ldots,k$,
hence
\[ \ti{\tau}_{\frac{k}{2}}=(\tfrac{k}{2}+1)^{-1}\chi .\]
Furthermore, by definition of the $S$-matrix, 
\[ S_{0,k/2}=(\tfrac{k}{2}+1)^{-\f{1}{2}},\ \ \ \ S_{k/2,k/2}=(\tfrac{k}{2}+1)^{-\f{1}{2}}(-1)^{\f{k}{4}}.\] 
Equation \eqref{eq:fs1} becomes
\[ \ca{Q}(M)=\frac{1}{2^{r-1}} \Big(\varphi'(\varepsilon) (\tau_{k/2})^r+
\sum_{\gamma\neq \varepsilon}\varphi'(\gamma) 
(-1)^{\f{k}{4}(r-\mathsf{l}(\gamma))}
(\tfrac{k}{2}+1)^{\f{\mathsf{l}(\gamma)}{2}-1}\chi\Big)\]
which agrees with Theorem \ref{th:main} for $\varphi'(\gamma)=\psi(\gamma) (-1)^{\f{k\,\mathsf{l}(\gamma)}{8}}$. 

The calculation is similar for the case $h=0, s=r=2,\ k\in 2\N$.  Here, $|\Gamma|=2$, and the generator $\gamma=(c,c)\in \Gamma$ has $\mathsf{l}(\gamma)=2$. We hence obtain 
\[ \ca{Q}(M)=\frac{1}{2} \Big(\varphi'(e,e) (\tau_{k/2})^r+
\varphi'(c,c) \chi\Big)
\]
which agrees with  Theorem \ref{th:main} if we put $\varphi'(e,e)=1$, and $\varphi'(c,c)=\pm 1$. 
If $h=0$ and $s=r=1,\ k\in 2\N$, then $\Gamma=\{e\}$, and the formula becomes 
$\ca{Q}(M)=\varphi'(e)\tau_{k/2}$, which is the correct expression for $\ca{Q}(\D_*)$ for 
$\varphi'(e)=1$. Similarly, if $h=r=0,\ s=1$ so that $M$ is a conjugacy class $\D_j\neq\D_*$, the 
formula reduces to $\ca{Q}(M)=\tau_{m_j}=\ca{Q}(\D_j)$. 
 
Consider finally the case $h=1,\,s=0$ so that $M=\mathbf{D}(\SO(3))$. Pre-quantizability of this space requires $k\in 2\N$, and as shown in \cite{me:su} the distinct pre-quantizations are indexed by 
$\varphi\in \on{Hom}(\Gamma,\on{U}(1))$, with  
$\Gamma=Z\times Z$. Separating off the term $(e,e)$, \eqref{eq:fs} becomes
\[ \ca{Q}(M)=\f{1}{4}\Big(\varphi'(\epsilon)\sum_{l}\f{1}{S_{0,l}^2}
\ti{\tau}_l+\sum_{\gamma\neq (e,e)}\varphi'(\gamma)\f{1}{S_{0,k/2}^2}\ti{\tau}_{k/2}\Big).\]
We have $\f{1}{S_{0,k/2}^2}\ti{\tau}_{k/2}=\chi$, and 
\[ \ca{Q}(\mathbf{D}(\SU(2)))=\sum_m \tau_m^2=\sum_{l,m} \f{S_{m,l}^2}{S_{0,l}^2}
\ti{\tau}_l=\sum_l \f{1}{S_{0,l}^2}\ti{\tau}_l,\]
where we use the symmetry and orthogonality of the $S$-matrix. Thus the formula may be re-written 
\[ \ca{Q}(M)=\f{1}{4}\Big(\varphi'(\epsilon)\ca{Q}(\mathbf{D}(\SU(2)))+\sum_{\gamma\neq (e,e)}\varphi'(\gamma)\ \chi\Big).\]
This agrees with the formula for $\ca{Q}(\mathbf{D}(\SO(3))$ given in \cite[Section 11.4]{me:su} if one puts 
$\varphi'(e,e)=1$ and $\varphi'(\gamma)=(-1)^{k/2}\varphi(\gamma)$ for $\gamma\neq(e,e)$. 
\end{proof}

By combining this result
with the `quantization commutes with reduction' theorem for
quasi-Hamiltonian spaces \cite[Theorem 10.1]{me:su}, and since the
coefficient of $\tau_0$ in $\ti{\tau}_l$ is $S_{0,l}^{2}$, we obtain
the Fuchs-Schweigert formula \cite{fu:ac} for the $\SO(3)$ moduli space
$\M(\Sigma,\Co_1,\ldots,\Co_s)$, where $\Sigma$ is of genus $h$ with $s$ boundary components. Recall that this moduli space has up two $2$ 
connected components, of the form $M/\!\!/\SU(2)$ for suitable choice of lifts $\D_j$. 
We have, 
\begin{equation}\label{eq:noneqFS}
 \ca{Q}(M/\!\! /\SU(2))=\frac{1}{|\Gamma|} \sum_{\gamma \in \Gamma} \varphi'(\gamma) {\sum_{l}}^{(\gamma)} \frac{S^{(\gamma_1)}_{m_1,l}\cdots S^{(\gamma_s)}_{m_s,l}}{(S_{0,l})^{s+2h-2}}.
 \end{equation}
\begin{remark}
The above Fuchs-Schweigert type formula computes the quantization of the moduli space of $\SO(3)$-bundles interpreted as the \emph{index} of a pre-quantum line bundle, while the original conjecture in \cite{fu:ac} concerns the dimension of the space of conformal blocks. It is expected that, just as in the case of simply-connected groups, the space of conformal blocks can be re-interpreted as the space of holomorphic sections, and that a Kodaira vanishing result can further identify its dimension with the index considered here. We are not aware of a reference addressing 
such questions in generality for non-simply connected groups.    
\end{remark}

\def\cprime{$'$} \def\polhk#1{\setbox0=\hbox{#1}{\ooalign{\hidewidth
  \lower1.5ex\hbox{`}\hidewidth\crcr\unhbox0}}} \def\cprime{$'$}
  \def\cprime{$'$} \def\cprime{$'$}
  \def\polhk#1{\setbox0=\hbox{#1}{\ooalign{\hidewidth
  \lower1.5ex\hbox{`}\hidewidth\crcr\unhbox0}}} \def\cprime{$'$}
  \def\cprime{$'$} \def\cprime{$'$} \def\cprime{$'$}
\providecommand{\bysame}{\leavevmode\hbox to3em{\hrulefill}\thinspace}
\providecommand{\MR}{\relax\ifhmode\unskip\space\fi MR }
\providecommand{\MRhref}[2]{%
  \href{http://www.ams.org/mathscinet-getitem?mr=#1}{#2}
}
\providecommand{\href}[2]{#2}

\end{document}